\date{January, 2018}
\newtheorem{theorem}{Theorem}
\newtheorem{theorem*}{Theorem*}
\newtheorem{lemma}[theorem]{Lemma}
\newtheorem{lemma*}[theorem*]{Lemma}
\newtheorem{proposition}[theorem]{Proposition}
\newtheorem{proposition*}[theorem*]{Proposition}
\newtheorem{corollary}[theorem]{Corollary}
\newtheorem{corollary*}[theorem*]{Corollary}
\newtheorem{definition*}[theorem*]{Definition}
\newtheorem{conjecture}[theorem]{Conjecture}
\newtheorem{problem}[theorem]{Problem}
\theoremstyle{remark}
\newcommand{\cD}{{\mathcal D}}
\newcommand{\cF}{{\mathcal F}}
\newcommand{\cO}{{\mathcal O}}
\newcommand{\CC}{{\mathbb C}}
\newcommand{\DD}{{\mathbb D}}
\newcommand{\QQ}{{\mathbb Q}}
\newcommand{\RR}{{\mathbb R}}
\newcommand{\TT}{{\mathbb T}}
\renewcommand{\SS}{{\mathbb S}}
\newcommand{\ZZ}{{\mathbb Z}}
\renewcommand{\a}{\alpha}
\newcommand{\eps}{\epsilon}
\renewcommand{\o}{\omega}
\newcommand{\D}{\Delta}
\title{Solution to Briot and Bouquet problem on singularities of differential equations}
\subjclass[2010]{37 F 50, 37 F 25.}
\keywords{Complex dynamics, Holomorphic dynamics, 
indifferent fixed points, hedgehogs, analytic circle 
diffeomorphisms, rotation number, small divisors,  
centralizers.}
\author[R. P\'{e}rez-Marco]{Ricardo P\'{e}rez-Marco}
\address{CNRS, IMJ-PRG, Paris 7, Bo\^\i te courrier 7012, 75005 Paris Cedex 13, France}
\email{ricardo.perez.marco@gmail.com}
\thanks{.}
\begin{document}

\begin{abstract}
We solve Briot and Bouquet problem on the existence of non-monodromic (multivalued) solutions
for singularities of differential equations in the complex domain. The solution is an application of hedgehog 
dynamics for indifferent irrational fixed points. We present an important simplification 
by only using a local hedgehog for which we give a simpler and  direct construction 
of quasi-invariant curves which does not rely on complex renormalization.
\end{abstract}

\maketitle

\section{Introduction.}

We prove the following Theorem:

\begin{theorem}
Let $f(z)=e^{2\pi i \a} z +\cO(z^2)$, $\a \in \RR-\QQ$ be a germ of holomorphic diffeomorphism with 
an indifferent irrational fixed point at $0$.

There is no orbit of $f$ distinct from the fixed point at $0$ that converges to $0$ by positive or negative 
iteration by $f$.
\end{theorem}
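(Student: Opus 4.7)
The plan is to split the argument according to whether $f$ is linearizable at $0$.

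If $f$ is linearizable, there is a Siegel disk $U$ containing $0$ on which $f$ is analytically conjugate to the rotation $z \mapsto e^{2\pi i \a} z$. On $U$ every non-trivial orbit lies on a smooth Jordan curve around $0$, so no orbit in $U$ converges to $0$. A hypothetical orbit $\{f^n(z_0)\}$ converging to $0$ would eventually enter $U$ and stay there, which contradicts the rotational picture inside $U$. The negative-iterate case is identical after replacing $f$ by $f^{-1}$, which has the same indifferent irrational multiplier $e^{-2\pi i \a}$.

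So suppose $f$ is not linearizable. Fix a small Jordan neighborhood $W$ of $0$ on whose closure both $f$ and $f^{-1}$ are defined and univalent. I would invoke the hedgehog construction the abstract promises: a compact, connected, full, $f$-invariant set $K \subset \overline{W}$ containing $0$ and meeting $\partial W$, with the crucial trapping property that any point of $\overline{W}$ whose entire forward (resp.\ backward) orbit stays in $\overline{W}$ already lies in $K$. Suppose for contradiction that $z_0 \neq 0$ and $f^n(z_0) \to 0$. For $n \geq N$ large enough, $f^n(z_0) \in W$ and these iterates never leave $W$, so $f^n(z_0) \in K$ for every $n \geq N$. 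We are thus reduced to showing that $K$ contains no non-trivial orbit converging to $0$.

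The key ingredient is that the dynamics on $K$ is, in an appropriate sense, conjugate to an irrational rotation of rotation number $\a$. Concretely, the Riemann map from the unbounded component of $\widehat{\CC}\setminus K$ to the exterior of the unit disk conjugates $f$ to a map that extends to an analytic circle diffeomorphism with rotation number $\a$. Carathéodory's prime end theory then gives a semiconjugacy from $f|_{\partial K}$ to the irrational rotation $R_\a$ of the circle. A non-trivial orbit in $K$ converging to $0 \in \partial K$ would project through this semiconjugacy to a non-trivial orbit of $R_\a$ accumulating only on the prime ends above $0$, which is impossible since every orbit of $R_\a$ is equidistributed on the whole circle.

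The main obstacle is the hedgehog construction itself and the trapping property: classically this is done via complex renormalization, and the paper's advertised novelty is a direct construction of quasi-invariant curves around $0$ that yields both the hedgehog and the needed trapping without renormalization. One must also take care that although points of $K$ are not individual prime ends and $0$ may be accessible through several of them, the equidistribution of $R_\a$-orbits is robust enough to rule out clustering only at the finitely (or countably) many prime ends above $0$. Once the simplified hedgehog is in place, the rotation-number/prime-end argument delivers the contradiction.
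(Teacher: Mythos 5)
Your high‑level framing (split on linearizability, use the hedgehog, exploit the irrational rotation number $\alpha$ of the induced circle map) matches the paper's, but two of your reductions rest on claims that are false for hedgehogs, and the argument breaks at exactly those points.

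First, the ``trapping property.'' You assert that any point of $\overline W$ whose forward orbit stays in $\overline W$ already lies in $K$, and use this to place the tail of a hypothetical convergent orbit inside $K$. This is not a property of hedgehogs. The hedgehog $K$ is a maximal \emph{compact connected totally invariant} set containing $0$: membership requires the full bi‑infinite orbit to stay in $\overline W$ \emph{and} requires being part of a connected invariant continuum through $0$. A point $z$ with $f^n(z)\to 0$ typically has a backward orbit that leaves $W$, so it cannot lie in any totally invariant subset of $\overline W$; and even when the full orbit stays in $W$, the orbit closure $\{0\}\cup\{f^n(z):n\in\ZZ\}$ of a convergent orbit is a disconnected set (each iterate is isolated), so there is no a priori reason it should be swallowed by $K$. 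In short, the reduction to orbits on $K$ is unjustified. The paper does not make this reduction: it proves separately (Theorem~\ref{thm_oscullating}) that an orbit \emph{off} $K_0$ which accumulates a point of $K_0$ must accumulate the whole of $K_0$, and this is a genuine additional theorem requiring the osculating‑orbit construction of Section~\ref{sec_osc}, not a soft trapping argument.

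Second, the prime‑end semiconjugacy step. You propose to ``project'' an orbit in $K$ through the prime‑end circle map and then use equidistribution of $R_\alpha$. But points of $K$ are not prime ends, and for hedgehogs there is no continuous correspondence between $\partial K$ and the circle of prime ends: as the paper emphasizes, hedgehogs are never locally connected, so the Riemann map $h_0:\CC-\overline\DD\to\CC-K_0$ does not extend continuously to $\SS^1$, and the impression of a prime end can be a large continuum. Even for an accessible point $z\in\partial K$ with a chosen prime end $\e$ whose principal point is $z$, the fact that $g_0^n(\e)$ is dense on the prime‑end circle tells you only that for any prime end $\e'$ some subsequence $g_0^{n_k}(\e)\to\e'$; because the impression is only upper semicontinuous, you cannot conclude that $f^{n_k}(z)$ accumulates the principal point of $\e'$, so equidistribution of $R_\alpha$ does not by itself forbid $f^n(z)\to 0$. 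The paper circumvents this by never arguing at the level of prime ends on $K_0$: it constructs quasi‑invariant curves $\gamma_n$ in $\CC-\overline\DD$ on which $\|g_0^{q_n}-\mathrm{id}\|$ is bounded in the Poincar\'e metric (Theorem~\ref{thm_quasi}), transports them to Jordan curves $\eta_n\to K_0$, gets $\|f^{q_n}-\mathrm{id}\|_{C^0(\eta_n)}\to 0$ in the Euclidean metric, and then uses the \emph{maximum principle} on the Jordan domains bounded by $\eta_n$ to conclude $f^{q_n}\to\mathrm{id}$ uniformly on $K_0$ (Theorem~\ref{thm_unif}). That maximum‑principle step is the mechanism that replaces the continuity of the boundary correspondence you are implicitly assuming, and it is the missing idea in your proposal.
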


This Theorem solves the question of C. Briot and J.-C. Bouquet on singularities of 
differential equations from 1856 (\cite{BB}), as well as questions of H. Dulac (1904, \cite{D1}, \cite{D2}), 
\'E. Picard (1896, \cite{P}), P. Fatou (1919, \cite{F}),
and two more recent conjectures of M. Lyubich (1986, \cite{Lyu}).

\medskip

The Theorem is trivial when the fixed point is linearizable, so, for the rest of the article, we 
assume that $f$ is not linearizable. 

\medskip

The main difficulty is to understand the non-linearizable dynamics. 
The proof relies on hedgehogs and their dynamics discovered by the author in \cite{PM5}. More precisely, 
we have from \cite{PM5} the existence of hedgehogs: 

\begin{theorem}[Existence of hedgehogs] \label{thm_hedgehogs}
 Let $U$ be a Jordan neighborhood of $0$ such that $f$ and $f^{-1}$ are defined and univalent on $U$, and continuous 
 on $\bar U$.
 
 There exists a hedgehog $K$ with the following properties:
 
 \begin{itemize}
  \item $0\in K \subset \bar U$
  \item $K$ is a full, compact and connected set.
  \item $K\cap \partial U \not= \emptyset$.
  \item $f(K)=f^{-1}(K)=K$.
 \end{itemize}

 Moreover, $f$ acts continuously on the space of prime-ends of $\CC-K$ and defines an homeomorphism of the circle of prime-ends with
 rotation number $\a$.
\end{theorem}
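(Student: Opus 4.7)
The plan is to build $K$ as the connected component of $0$ in the maximal completely-invariant compactum inside $\bar U$, and then to exploit the non-linearizability of $f$ to force $K$ to meet $\partial U$. The rotation-number statement will then come from Carath\'eodory's prime-end theory together with the identity $f'(0) = e^{2\pi i \alpha}$.

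More concretely, for each integer $n \geq 0$ I would let $W_n$ be the connected component of $0$ in the open set of points $z \in U$ for which $f^k(z) \in U$ and $f^{-k}(z) \in U$ for all $0 \leq k \leq n$, and then let $\widetilde K_n$ denote the topological hull of $\overline{W_n}$ (i.e.\ the union of $\overline{W_n}$ with every bounded component of its complement). The $\widetilde K_n$ form a decreasing sequence of compact, connected, full subsets of $\bar U$ containing $0$, so their intersection $K := \bigcap_n \widetilde K_n$ is automatically compact, connected, full, and contains $0$. The inclusions $f(W_{n+1}) \subset W_n$ and $f^{-1}(W_{n+1}) \subset W_n$, together with the fullness of each $\widetilde K_n$, force $f(K) = K = f^{-1}(K)$.

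The heart of the argument is the claim $K \cap \partial U \neq \emptyset$, and this is where the hard work lies. I would argue by contradiction: if $K \subset U$ then, by compactness and nestedness, some $\widetilde K_N$ is already compactly contained in $U$, and hence $f$ together with all its positive and negative iterates is defined and univalent on a fixed Jordan neighborhood of $K$. Using the conformal uniformizations $\phi_n \colon \hat\CC \setminus \widetilde K_n \to \hat\CC \setminus \bar\DD$ normalized at infinity, Koebe distortion bounds, and a normal-family argument, one should extract a conformal conjugacy between $f$ near $K$ and a rigid rotation on an annulus adjacent to $\partial \DD$; this rotation extends across $K$ and produces a linearization of $f$ at $0$, contradicting the standing non-linearizability hypothesis.

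Once $K$ is known to reach $\partial U$, the rest is relatively formal. Fullness of $K$ makes $\Omega := \hat\CC \setminus K$ simply connected, so a Riemann map $\phi \colon \Omega \to \hat\CC \setminus \bar\DD$ induces, via Carath\'eodory's theorem, a homeomorphism from the circle of prime ends of $\Omega$ onto $\partial \DD$. Since $f$ is univalent on a neighborhood of $K$ contained in $\bar U$ and preserves $K$, it acts homeomorphically on the nearby part of $\Omega$, and this transports through $\phi$ to a circle homeomorphism $\tilde f$ of $\partial \DD$. To identify its rotation number, I would note that irrationality of $\alpha$ combined with $f(z) = e^{2\pi i \alpha} z + \cO(z^2)$ precludes periodic orbits accumulating at $0$, so $\tilde f$ has irrational rotation number; the actual value $\alpha$ is then read off by tracking the argument increment of small loops around $0$ under one iterate of $f$ and transporting that data to $\partial \DD$ through $\phi$. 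The main obstacle is unambiguously the "touches the boundary" step: it is the only place where the non-linearizability hypothesis has to be converted into a topological fact about $K$, and it is where the delicate complex-analytic estimates on the $\phi_n$ from Perez-Marco's original construction carry the weight of the proof.
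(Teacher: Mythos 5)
Note first that the paper does not prove this theorem; it is quoted from \cite{PM5}, so there is no internal proof here to compare against, and your outline must stand on its own.

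Your reduction to the single claim $K\cap\partial U\neq\emptyset$ and the set-up of nested filled compacts $\widetilde K_n$ with a totally invariant intersection $K=\bigcap_n\widetilde K_n$ are fine. The first real gap is the sentence asserting that once some $\widetilde K_N$ is compactly contained in $U$, ``$f$ together with all its positive and negative iterates is defined and univalent on a fixed Jordan neighborhood of $K$.'' That does not follow: $\widetilde K_N\Subset U$ controls only the first $N$ iterates, and total invariance of $K$ gives control of $f^{\pm n}$ on $K$ itself but on no fixed \emph{open} neighborhood, since the domains on which $f^{\pm n}$ is defined may shrink to $K$ as $n\to\infty$. Without a uniform domain the Montel/Koebe/Hurwitz machinery has nothing to act on. Converting $K\Subset U$ into either a uniform invariant domain or a contradiction by some other route is precisely the hard content of \cite{PM5}, carried out through the uniformization of $\CC-K$, the induced analytic circle diffeomorphism, and a genuine dynamical argument; it is not a soft normal-family observation.

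The second gap is the rotation-number claim. The rotation number of the prime-end homeomorphism $\tilde f$ is a global conjugacy invariant; the hedgehog is never locally connected, the Riemann map is wildly discontinuous toward $K$, and the multiplier $e^{2\pi i\alpha}$ at the interior fixed point does not transport to boundary data by ``tracking the argument increment of small loops.'' Your exclusion of periodic orbits is also unfounded: irrationality of $\alpha$ does not by itself preclude periodic orbits of $f$ accumulating $0$ (small cycles do occur for certain non-linearizable germs), such cycles lie inside $K$ and are invisible to $\tilde f$, and conversely a fixed prime end of $\tilde f^q$ need not come from any fixed point of $f^q$. The paper itself flags that $\rho(\tilde f)=\alpha$ is ``harder to prove in general'' and invokes \cite{PM5}, Lemma~III.3.3, where a particular hedgehog $K$ is \emph{chosen} to have the right rotation number rather than this being established for an arbitrary invariant compact.
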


\medskip

\begin{figure}[ht]
\centering
\resizebox{6cm}{!}{\includegraphics{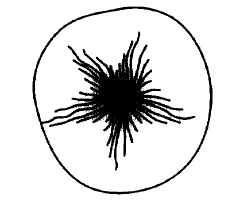}}    
\caption{A hedgehog and its defining neighborhood.}
\end{figure}

\medskip

In the proof we only need to consider local hedgehogs, i.e. a hedgehog associated 
to a small disk $U=\DD_{r_0}$ with $r_0>0$
small enough. Let $K_0$ be the hedgehog associated to $\DD_{r_0}$. The two following Theorems imply our main 
Theorem.

\begin{theorem}\label{thm_unif}
Let $(p_n/q_n)_{n\geq 0}$ be the sequence of convergents of $\alpha$. We have
$$
\lim_{n\to +\infty} f_{/K_0}^{\pm q_n} ={\hbox {\rm id}}_{K_0} \ ,
$$
where the convergence is uniform on $K_0$.
\end{theorem}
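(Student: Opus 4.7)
The plan is to combine the prime-end dynamics with a normal family argument on iterates. By Theorem~\ref{thm_hedgehogs}, $f$ induces an orientation-preserving homeomorphism $\tilde f$ of the prime-end circle $\cE$ of $\CC\setminus K_0$ with rotation number $\alpha$. My first objective is to show
\[
\tilde f^{q_n}\longrightarrow \mathrm{id}_\cE \quad\text{uniformly on}\ \cE.
\]
A continuous monotone semiconjugacy of $\tilde f$ to the rigid rotation $R_\alpha$ conjugates $\tilde f^{q_n}$ to $R_{q_n\alpha-p_n}$, which tends uniformly to the identity on $\SS^1$. What remains is to rule out wandering intervals for $\tilde f$ so that this semiconjugacy is in fact a conjugacy; a wandering interval on $\cE$ would reflect a wandering geometric structure attached to $K_0$, and this should be excluded by the holomorphic rigidity of $f$ together with the hedgehog structure. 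This is presumably where the \emph{quasi-invariant curves} alluded to in the abstract enter.

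Next, transfer the convergence from $\cE$ to $K_0$ itself. Let $\psi:\CP^1\setminus\bar{\DD} \to \CP^1\setminus K_0$ be the Riemann uniformization (which exists since $K_0$ is full, compact, and connected). By Carath\'eodory's theorem, $\psi$ extends to a continuous surjection $\SS^1\to\cE$, and so uniform convergence on $\cE$ yields $f^{q_n}\to\mathrm{id}$ uniformly on the set $A\subset\partial K_0$ of accessible points, which is dense in $\partial K_0$. To upgrade this to uniform convergence on all of $K_0$, I observe that the family $\{f^{q_n}|_{K_0}\}$, being holomorphic on shrinking neighborhoods of $K_0$ with values in $\DD_{r_0}$, is equicontinuous on the compact set $K_0$ by standard Koebe distortion estimates. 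Hence Arzel\`a--Ascoli extracts a subsequential uniform limit $g : K_0 \to K_0$ that agrees with $\mathrm{id}$ on the dense set $A$; by continuity $g = \mathrm{id}$ on $\partial K_0$, and the identity theorem applied to a holomorphic extension of $g$ to a neighborhood of $K_0$ gives $g = \mathrm{id}$ on $K_0$. Since every subsequential limit is the identity (note also that $g'(0) = \lim e^{2\pi i q_n\alpha} = 1$ since $q_n\alpha - p_n \to 0$), the full sequence $f^{q_n}$ converges uniformly to $\mathrm{id}$ on $K_0$. The statement for $f^{-q_n}$ follows by applying the same argument to $f^{-1}$, which is a univalent germ at $0$ with rotation number $-\alpha$ and the same denominators $q_n$ in its convergents.

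\textbf{Main obstacle.} The crux is the first step: a general circle homeomorphism with irrational rotation number need not satisfy $\tilde f^{q_n}\to\mathrm{id}$ (Denjoy counterexamples exist in the $C^1$ category). Excluding this pathology for the prime-end homeomorphism $\tilde f$ requires exploiting the holomorphic rigidity of $f$ and the detailed geometric structure of the hedgehog, and this is the main technical ingredient of the proof. The transfer from $\cE$ to $K_0$ and the final equicontinuity upgrade, while nontrivial, should then proceed by standard function-theoretic arguments.
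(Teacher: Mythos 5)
Your proposal identifies some of the right ingredients but misplaces where the difficulty actually lies, and in two places the argument as written would not go through.

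First, the ``main obstacle'' you flag — ruling out wandering intervals for the prime-end homeomorphism $\tilde f$ — is not where quasi-invariant curves are needed at all. The paper shows that the prime-end map $g_0 = h_0^{-1}\circ f\circ h_0$ is an \emph{analytic} circle diffeomorphism (this comes from Schwarz reflection plus the prime-end extension established in \cite{PM5}), and for analytic (indeed $C^1$ with $\log Dg$ of bounded variation) diffeomorphisms Denjoy's theorem already gives topological linearization, hence $g_0^{q_n}\to\mathrm{id}$ uniformly on $\SS^1$. No extra holomorphic rigidity is needed for this step; it is classical.

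Second, the actual hard step, which you wave off as ``standard function-theoretic arguments,'' is the transfer from the prime-end circle to $K_0$. As the paper emphasizes, $K_0$ is \emph{never} locally connected, so $h_0$ does not extend continuously to $\SS^1\to\partial K_0$, and uniform convergence in the prime-end topology gives no direct control in the Euclidean topology. This is precisely the gap that the quasi-invariant curve construction fills: one produces Jordan curves $\gamma_n$ (flow lines for an interpolating flow) on which $\|g_0^{q_n}-\mathrm{id}\|_{C^0_P(\gamma_n)}\le C_0$ for the Poincaré metric; transporting by $h_0$ gives curves $\eta_n\to K_0$ with $\|f^{q_n}-\mathrm{id}\|_{C^0_P(\eta_n)}\le C_0$ in the Poincaré metric of $\CC\setminus K_0$; and since the Poincaré metric of $\CC\setminus K_0$ blows up relative to the Euclidean metric near $\partial K_0$, this bounded hyperbolic distance on $\eta_n$ turns into $\|f^{q_n}-\mathrm{id}\|_{C^0(\eta_n)}=\eps_n\to 0$ Euclideanly. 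The maximum principle then pushes the bound from $\eta_n$ to the enclosed Jordan domain, hence to $K_0$. Your alternative — Arzelà–Ascoli plus Koebe distortion on $K_0$ — does not obviously work: $f^{q_n}$ is not defined on a fixed neighborhood of $K_0$, $K_0$ has empty interior so there is no domain on which to invoke Montel directly, and equicontinuity of the restrictions to the compact (but nowhere open, non-locally-connected) set $K_0$ is not ``standard.'' The quasi-invariant-curve/Poincaré-metric comparison is exactly the device that replaces these missing estimates.
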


Therefore all points of the hedgehog are uniformly recurrent, and no point 
on the hedgehog distinct from $0$ converges to $0$ by positive or negative iteration by $f$. 

\begin{theorem} \label{thm_oscullating}
Let $z_0 \in U-K_0$ such that the positive, resp. negative, orbit $(f^n(z_0))_{n\geq 0}$, resp. 
$(f^{-n}(z_0))_{n\geq 0}$, accumulates a point on $K_0$. Then this 
orbit accumulates all $K_0$,
$$
K_0\subset {\overline{(f^{n}(z_0))_{n\geq 0}}} \ \  ({\hbox{\rm {resp.}}} K_0\subset {\overline{(f^{-n}(z_0))_{n\geq 0}}} ) \ .
$$
\end{theorem}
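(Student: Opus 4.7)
\medskip
\noindent\textbf{Proof plan for Theorem \ref{thm_oscullating}.} The plan is to combine the equicontinuity of iterates on $K_0$ from Theorem \ref{thm_unif} with the irrational prime-end rotation from Theorem \ref{thm_hedgehogs}. I treat the positive-orbit case; the negative case is symmetric, applied to $f^{-1}$. Let $A = K_0 \cap \overline{\{f^n(z_0) : n \geq 0\}}$; by hypothesis $A \neq \emptyset$, and the goal is to prove $A = K_0$.

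First, a routine preliminary: $A$ is closed, and is $f^{\pm 1}$-invariant because if $f^{n_k}(z_0) \to p$ then $f^{n_k+1}(z_0) \to f(p)$, while by precompactness of the orbit near $K_0$ a subsequence of $\{f^{n_k-1}(z_0)\}$ converges to $f^{-1}(p)$. The key reduction is the following density claim: \emph{for every $p \in K_0$, the orbit $\{f^n(p) : n \in \ZZ\}$ is dense in $K_0$.} Assume it; fix $p \in A$ and any $q \in K_0$, pick $m_j$ with $f^{m_j}(p) \to q$, then pick $k_j$ with $f^{n_{k_j}}(z_0)$ close enough to $p$ that the equicontinuity of $\{f^m\}$ on a neighborhood of $K_0$ yields $|f^{m_j + n_{k_j}}(z_0) - f^{m_j}(p)| < 1/j$. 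A diagonal extraction gives $f^{m_j + n_{k_j}}(z_0) \to q$, so $q \in A$, proving $A = K_0$.

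To prove the density claim, I would use Theorem \ref{thm_hedgehogs}: $f$ acts on the circle of prime ends of $\CC \setminus K_0$ as an irrational rotation, so every prime-end orbit is dense. In the non-linearizable setting $K_0$ has empty interior (a standard feature of hedgehogs from \cite{PM5}), so the accessible points of $K_0$ are dense in $K_0$, and the dense prime-end orbits transfer to density of the $f$-orbit of any accessible point in $K_0$. For a non-accessible $p$, take accessible $p_n \to p$; since $\{f^k|_{K_0}\}_{k \in \ZZ}$ is equicontinuous, whenever $f^{k_n}(p_n)$ meets a given open $V \subset K_0$, $f^{k_n}(p)$ lies in any fixed enlargement of $V$, and a diagonal argument yields density for $p$. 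The two main obstacles are: (i) upgrading the convergence $f^{\pm q_n}|_{K_0} \to \mathrm{id}$ of Theorem \ref{thm_unif} to full equicontinuity of $\{f^m|_{K_0}\}_{m \in \ZZ}$ on a neighborhood of $K_0$, via an Ostrowski-type expansion of $m$ along the denominators $q_n$; and (ii) making the passage from prime-end dynamics to accumulation at points of $K_0$ precise, handling prime ends with non-degenerate impressions by working with principal (accessible) points and then propagating density from accessible to non-accessible points using (i).
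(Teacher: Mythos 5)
Your approach diverges completely from the paper's and runs into a genuine gap at its central claim. You reduce the theorem to the assertion that \emph{every} $p \in K_0$ has $f$-orbit dense in $K_0$, but this is false: the fixed point $0$ has orbit $\{0\}$. This is not a removable edge case, because the hypothesis of Theorem \ref{thm_oscullating} only guarantees that $(f^n(z_0))_{n\ge 0}$ accumulates \emph{some} point of $K_0$, which could a priori be just $0$; ruling out accumulation only at $0$ is essentially the content of the main theorem, so you cannot assume it. Even for $p \ne 0$ the density claim is not available: the paper only records from \cite{PM5} that orbits are dense for harmonic-measure \emph{almost every} point of the hedgehog, and the transfer from the irrational prime-end rotation to density of $f$-orbits on $K_0$ itself is obstructed by the non-local-connectedness of $K_0$ (a dense prime-end orbit does not yield a dense orbit of a given boundary point when impressions are non-degenerate). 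Of your two declared obstacles, (ii) is thus not a technical loose end but the place where the argument cannot be completed; and (i), full equicontinuity of $\{f^m|_{K_0}\}$, is neither a consequence of Theorem \ref{thm_unif} alone (the Ostrowski error $\sum a_{i+1}\,\varepsilon_i$ has no reason to converge) nor actually needed for your reduction step, where plain continuity of each $f^{m_j}$ plus the diagonal extraction already suffices.

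The paper's proof is an exterior argument that never touches individual orbits on $K_0$. It takes the osculating orbit of Theorem \ref{thm_osc}: a finite segment $(g^j(z_0))_{0\le j\le q_n}$ on the quasi-invariant curve $\gamma_n$ whose Poincaré $C_0$-balls form a barrier separating the circle from infinity, so that any orbit of $g$ with iterates on both sides of $\gamma_n$ must pass through every one of these balls. Transporting by the uniformization $h_0$, the curves $\eta_n = h_0(\gamma_n)$ shrink onto $\partial K_0$, the transported orbit segment $\cO_n$ is Euclidean-Hausdorff $\varepsilon_n$-close to $\partial K_0$, and the Poincaré $C_0$-balls have Euclidean radius tending to $0$ near the boundary. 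Since the orbit of $z_0$ starts outside $\eta_n$ and, by hypothesis, has an iterate inside $\eta_n$ for every large $n$, it must cross the barrier, hence come Euclidean-$\varepsilon_n$-close to every point of $\partial K_0$; letting $n\to\infty$ gives the result. This argument crucially exploits $z_0 \notin K_0$ — a piece of the hypothesis your proposal never uses — and entirely sidesteps the intrinsic dynamics on the hedgehog.
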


In particular this implies that if such an orbit $(f^{n}(z_0))_{n\geq 0}$ (resp. $(f^{-n}(z_0))_{n\geq 0}$) accumulates $0\in K_0$ then it cannot 
converge to $0$. Note that if $f$ is not linearizable then it is clear that $0\in \partial K_0$.
Indeed one can prove that the hedgehog $K_0$ has empty interior and $K_0=\partial K_0$, but we don't need to use this fact. We can just prove the previous 
Theorem for $\partial K_0$.

\medskip

The proof of these two Theorems are done by constructing quasi-invariant curves near the hedgehog. These 
are Jordan curves surrounding the hedgehog and almost invariant by high iterates of the dynamics. The 
quasi-invariance property is obtained for the Poincar\'e metric of the complement of the hedgehog in the 
Riemann sphere.

\medskip

Therefore,  it is enough to carry out  the construction for local hedgehogs, and for these 
we have a direct and simpler construction of quasi-invariant curves, that does not rely on complex 
renormalization techniques. Classical one real dimensional estimates for smooth circle diffeomorphism 
combined with an hyperbolic version of Denjoy-Yoccoz Lemma in order to control the complex orbits for analytic circle diffeomorphisms, are enough. 
This gives an important simplification for local hedgehogs 
of the proof of the main Theorem that was announced in \cite{PM2}.


\section{Historical introduction on Briot and Bouquet problem.}

In 1856 C. Briot and J.-C. Bouquet published a foundational article \cite{BB} on the local solutions
of differential equations in the complex domain. They are particularly interested in how a local 
solution determines uniquely the holomorphic function through analytic continuation. 
They consider a first order differential equation of a differential equation 
of the form
$$
\frac{d y}{dx} = f(x,y) \ ,
$$
where $f$ is a meromorphic function of the two complex variables $(x,y)\in \CC^2$ in a neighborhood of a point 
$(x_0,y_0)$. A. Cauchy proved his fundamental Theorem on existence and uniqueness of local 
solutions\footnote{What is called today in Calculus books Cauchy-Lipschitz Theorem.}: 
If $f$ is finite and holomorphic in a
neighborhood of $(x_0,y_0)$ then there exists a unique holomorphic local solution $y(x)$ satisfying the 
initial conditions
$$
y(x_0) = y_0 \ .
$$

In their terminology, Briot and Bouquet talk about ``solutions monog\`enes et monodromes'', ``monog\`ene'' or monogenic
meaning $\CC$-differentiable, i.e. holomorphic, and ``monodrome'' or monodromic meaning univalued, 
since they also consider multivalued solutions with non-trivial monodromy at $x_0\in \CC$.

\medskip

Briot and Bouquet start their article by giving a simple proof of Cauchy Theorem by the majorant series method.
Then they consider the situation where $f$ is infinite or has a singularity at $(x_0, y_0)$. They observe that even 
in Cauchy's situation, we may get to such a point by a global analytic continuation of any solution. We 
assume for now on that $(x_0,y_0)=(0,0)$. Writing down $f$ as the quotient of two holomorphic germs
$$
f(x,y)=\frac{A(x,y)}{B(x,y)} \ ,
$$
they study the situation when $A(0,0)=B(0,0)=0$ (they call these singularities ``of the form $\frac{0}{0}$''). 
This is done in Chapter III, starting in section 75 of \cite{BB}.
After a simple change of variables, the equation reduces to
$$
x\frac{dy}{dx}= a y+bx +\cO(2) \ ,
$$
and a discussion starts considering the different cases for different values of the coefficients $a,b \in \CC$. 
They prove the 
remarkable Theorem that if $a$ is not a positive integer, then there always exists a holomorphic solution $y(x)$
in a neighborhood of $0$ vanishing at $0$ (Theorem XXVIII in section 80 of \cite{BB}). They show that this 
holomorphic solution is the only monodromic one and in their proof of uniqueness (in section 81) 
the equation is put in the form
$$
x\frac{dy}{dx}=y (a+\cO(2)) \ .
$$
In this last form the holomorphic solution corresponds to $y=0$.

\medskip

After that they proceed to show that when the real part of $a$ is positive there are infinitely many 
non-monodromic solutions (section 82 in \cite{BB}), i.e. holomorphic solutions $y(x)$ that are 
multivalued around $0\in \CC$.

\medskip

They make the claim in section 85 in \cite{BB} that when the real part of $a$ is negative there are no other 
solutions, not even non-monodromic, other than the holomorphic solution found.  

\medskip

The proof of this statement contains a gap. Starting with the new form of the differential equation
$$
x\frac{dy}{dx}=y(a +\cO_y(1))+xy \varphi(x,y) \ ,
$$
they transform it into
$$
\frac{dy}{y} + (A+By+\ldots)\,  dy = a\, \frac{dt}{t} + \psi (x,y) \, dt \ ,
$$
where $A+By+\ldots$ is a holomorphic function of $y$ near $0$ and $\psi$ is holomorphic near $(0,0)$.
Assuming by contradiction the existence of another solution, 
integration of the equation over a path from $x_1$ to $x$, $y_1=y(x_1)$, gives
$$
\log \left (\frac{y}{y_1} \right )+ (A(y-y_1)+\ldots )=\log \left ( \frac{x}{x_1}\right )^a 
+\int_{x_1}^x \psi(x, y(x)) \, dx \ .
$$
They pretend that this is of the form
$$
\log \left (\frac{y(x)}{y_1} \right )=\log \left ( \frac{x}{x_1}\right )^a 
+\eps,
$$
where $\eps$ is a small quantity, vanishing for $x=x_1$, and very small when $x\to 0$, to get 
a contradiction using that for $\Re a<0$,  $\Re \log (x/x_1)^a\to +\infty$ when $x\to 0$ 
but $\Re \log (y/y_1) \to -\infty$ if $y(x)\to 0$.

\medskip

Unfortunately $\eps$ is not small because since $y(x)$ is not monodromic, the integral 
$$
\int_{x_1}^x \psi(x, y(x)) \, dx
$$
is not monodromic either, and if the path of integration spirals around $0$ it can get arbitrarily large. 

\medskip

\'E. Picard observes (\cite{P} Vol. II p.314 and p.317, 1893, see also Vol. III p.27 and 29, 1896) 
that with some implicit assumptions (that are not in  \cite{BB}) 
the argument is correct if we approach $x=0$ along a path of finite 
length where the argument of $y(x)$ stays bounded or with a tangent at $0$, trying (not very convincingly) to rebate 
L. Fuchs that pointed out the error in \cite{Fu}. 
H. Poincar\'e does not mention the error in his article \cite{Po1} where 
he states Briot and Bouquet result without any restriction, 
and in his Thesis \cite{Po2} where he studies the case  where the real
part of $a$ is positive (and carefully avoids discussing further the 
other problematic case). 

\medskip

Picard, in his first edition of his ``Trait\'e d'Analyse'' (\cite{P}, Vol. III, page 30, 1896), 
casts no doubt about the correction of Briot and Bouquet statement:

\medskip

\textit{``Il resterait \`a d\'emontrer que ces deux int\'egrales sont, en dehors de toute hypoth\`ese, les seules qui
passent par l'origine ou qui s'en rapprochent ind\'efiniment. Je dois avouer que je ne poss\`ede pas une d\'emonstration 
rigoureuse de cette proposition, qui ne para\^\i t cependant pas douteuse.''}\footnote{\textit{``It remains to prove that these two solutions are,
without any assumption, the only ones passing through the origin or accumulating it. I have to admit that 
I don't have a proof of this fact but it doesn't seem doubtful.''}}

\medskip

He refers to the two Briot-Bouquet holomorphic solutions $y(x)$ and $x(y)$.
His belief is probably reinforced by the saddle picture for real solutions 
that clearly only exhibit two real solutions in $\RR^2$ passing through the singularity.

\medskip

A major progress came with the Thesis of H. Dulac published in 1904 in the Journal of the 
\'Ecole Polytechnique \cite{D1}. He proves the existence of an infinite number of distinct non-monodromic 
solutions when $a$ is a negative rational number, thus proving than Briot and Bouquet original claim is 
always false in the rational situation. From the introduction of \cite{D1} we can read

\medskip

\textit {``$\ldots $ on sait depuis bien longtemps, qu'il n'existe que deux 
courbes int\'egrales r\'eelles passant par l'origine. 
En est-il de m\^eme dans le champ complexe ? C'est une question 
qui restait en suspens et que les g\'eom\`etres penchaient 
\`a trancher par l'affirmative (Picard, Trait\'e d'Analyse, 
II (sic)\footnote{Volume III is the correct reference.}, p. 30). 
Or je prouve, au contraire, tout au moins dans le 
cas o\`u $\alpha$ est rationnel, qu'il existe une infinit\'e 
d'int\'egrales $y(x)$ s'annulant avec $x$ ($x$ tendant vers 
z\'ero suivant une loi convenable) $\ldots $''}
\footnote{``$\ldots $ from long time ago we know that there are only two real solutions 
passing through the origin. Is it the same in the complex? This is a question that remained open 
and that the geometers were inclined to decide in the affirmative (Picard, Trait\'e d'Analyse, 
II (sic), p. 30). But, on the contrary, I prove, at least in the case when $\a$ is rational, that 
there are infinitely many solutions $y(x)$ vanishing with $x$ ($x$ converging to $0$ under a suitable 
law) $\ldots$''}

\medskip

After Dulac's result Picard changed the quoted text in later editions of
his Trait\'e d'Analyse (\cite{P}, Vol. III, 3rd edition, p.30, 1928) into:

\medskip

\textit{``On a longtemps pr\'esum\'e que ces int\'egrales sont, en dehors de 
toute hypoth\`ese, les seules qui passent par l'origine
ou qui s'en rapprochent ind\'efiniment. 
Dans un excellent travail sur les points singuliers des \'equations 
diff\'erentielles, M. Dulac a d\'emontr\'e que la question 
\'etait tr\`es complexe. Prenons, par exemple, l'\'equation
$$
x\frac{dy}{dx}+y(\nu +\ldots )=0 \ ,
$$
o\`u $\nu$ est positif, \'equation \`a laquelle peut toujours se ramener le cas o\`u $\lambda$ est n\'egatif.
M. Dulac examine particuli\`erement le cas o\`u $\nu$ est un nombre rationnel $p/q$, et 
montre qu'il y a alors, en g\'en\'eral, 
une infinit\'e d'int\'egrales pour lesquelles $x$ et $y$ tendent vers $0$.''
}
\footnote{``For a long time it was believed that, without any further condition, these are the only solutions 
passing through or accumulating the origin.

In an excellent work on the singular points of differential equations, M. Dulac has proved that the question is very complex. Take 
for instance the equation
$$
x\frac{dy}{dx}+y(\nu +\ldots )=0 \ ,
$$
where $\nu$ is positive, equation that we can always reduce the case where $\lambda$ is negative.

M. Dulac examines specially the case where $\nu$ is a rational number $p/q$, and proves that in general there are an infinite 
number of solutions for which $x$ and $y$ converge to $0$.''}

\medskip

Dulac insisted in his Thesis that he had no answer for the irrational case (\cite{D1} p.4):

\medskip
\textit{``1. $\nu$ est irrationnel. On a un col. $H(x,y)$ existe formellement, mais est divergent, au moins dans certains cas. 
S'il y a des int\'egrales pour lesquelles $x$ et $y$ tendent simultan\'ement vers $0$, et si l'on d\'esigne par $\omega$ et $\theta$
les arguments de $x$ et $y$, quels que soient $m$ et $n$, $|x^my^n\omega|$ et $|x^m y^n\theta |$ croissent ind\'efiniment. Je ne puis 
me prononcer sur l'existence de ces int\'egrales.''}\footnote{``1.$\nu$ is irrational. We have a saddle. $H(x,y)$ exists formally, but is divergent,
at least in certain cases. If there are solutions $x$ and $y$ which tend simultaneously to $0$, and if we note $\omega$ and $\theta$ the arguments 
of $x$ and $y$, then for all $m$ and $n$, $|x^my^n\omega|$ and $|x^m y^n\theta |$ must grow indefinitely. I cannot decide on the 
existence of such solutions.'' }

The expression $yx^\nu H(x,y)$ is a formal first integral of the solutions and he discuss its convergence in p.20.
It is well known to Dulac that convergence of $H$ solves the problem.

\medskip

Then $30$ years later he  recalls that the problem remains unsolved (\cite{D2} p.31): 

\medskip

\textit{``Dans le cas $2$ ($\nu$ irrationnel, $h(x,y)$ divergent), on ne sait s'il existe des solutions nulles autres que  $x=0$, $y=0$. 
Ce sont l\`a deux questions qu'il y aurait grand int\'er\^et \`a \'elucider.''}\footnote{``In case $2$ ($\nu$ irrational, $h(x,y)$ divergent),
we don't know if there are null solutions other than $x=0$, $y=0$.''}

\medskip

Many results obtained by these distinguished geometers of the XIXth century where 
rediscovered in modern times, sometimes with a different point of view or language. The 
original problem of singularities of differential equations of the form $\frac{0}{0}$ (according to Briot and 
Bouquet terminology) is equivalent to study solutions of the holomorphic vector field $X=(B,A)$ near 
$(0,0) \in \CC^2$,

\begin{equation*}
\begin{cases}
\dot x &= B(x,y)  \\
\dot y &= A(x,y) 
\end{cases} 
\end{equation*}

The local geometry corresponds also to the 
study the holomorphic foliations on $\CC^2$ near the singular point $(0,0)$
defined by the differential form
$$
A(x,y) dx - B(x,y) dy =0 \ .
$$
The situation of Briot and Bouquet problem corresponds to an irreducible singularity with a  
non-degenerate linear part,
$$
(\a y+\cO(2)) dx + (x +\cO(2)) dy =0 \ ,
$$
where $-\a=a$ is Briot and Bouquet coefficient.

\medskip

When $\a\in \CC-\RR_+$, and $\a$ is neither a negative integer nor the inverse of a negative integer,  
we are in the Poincar\'e domain and the singularity is equivalent to 
the linear one. When $\a$ is a negative integer or its inverse, then we can conjugate the singularity to 
a finite Poincar\'e-Dulac normal form (see \cite{Ar} section 24). We assume $\a$ real and positive $\a >0$, which 
defines, in modern terminology, a singularity in the Siegel domain. 
The singularity is formally linearizable, but the convergence
of the linearization presents problems of Small Divisors. Precisely in this situation Dulac 
already proved in his Thesis 
the existence of non-linearizable singularities in section 12. This is a notable achievement that anticipates 
in several decades the non-linearization results for indifferent fixed points.
The existence of Briot and Bouquet holomorphic solution proves the existence of two leaves of the 
holomorphic foliation crossing transversally at $(0,0)$. This means that the singularity can be put 
into the form
$$
\a y (1+\cO(2)) dx + x (1+\cO(2)) dy =0 \ .
$$
Again $y=0$ corresponds to the Briot and Bouquet holomorphic solution. It is now easy to make the link 
with the original Briot and Bouquet $\frac{0}{0}$ singulatities of differential equations. Each solution $y(x)$, 
distinct from the only monodromic solution $y(x)=0$, with initial data $(x_0,y_0)$ close to $(0,0)$, has a graph 
over the $x$-axes that
corresponds to the leaf of the foliation passing through the point $(x_0,y_0)$. The multivaluedness or 
non-monodromic character
of the solution can be seen in the intersection of that leaf with a transversal $\{x=x_0\}$. The $y$-coordinates
of these points of intersection give the different values taken by the non-monodromic solution that are 
obtained by following a path in the leaf that projects onto the $x$-axes into a path circling around $x=0$.

\medskip

The topology of the foliation is understood through a holonomy construction (see \cite{MM}, and for the 
rational case see \cite{C}): 
Taking a transversal $\{x=x_0\}$
and lifting the circle $C(0, |x_0|) \subset \{y=0\}$ in nearby leaves, the return map following this 
lift in the negative orientation, defines a
germ of holomorphic diffeomorphism in one complex variable with a fixed point at $(x_0,0) \subset \{x=x_0\}$.
Taking a local chart in this complex line, we have a local holomorphic diffeomorphism 
$f\in {\hbox{\rm Diff}} (\CC , 0)$, $f(0)=0$, and linearizing the equations we can compute its linear part at $0$,
$$
f(z) =e^{2\pi i \a} z +\cO(z^2) \ .
$$
(to see this, note that $yx^{\a}$ is a first integral of the linearized differential form, thus is invariant of the solutions
in the first order)
Thus we get a germ of holomorphic diffeomorphism with an indifferent irrational fixed point.
It is obvious from the classical point of view that the local dynamics near $0$ of 
this return map contains the information about 
the non-monodromic solutions starting at $x=x_0$. Thus we transform our original problem into a problem 
of holomorphic dynamics. Note that we can also reconstruct all the foliation and a neighborhood 
of $(0,0)\in \CC^2$ minus 
the leave $\{y=0\}$ by continuing the continuing the complex leaves from the transversal. J.-F. Mattei and P. Moussu
proved in \cite{MM} that two singularities in the Siegel domain 
with conjugated holonomies are indeed conjugated in $\CC^2$ by 
``pushing'' the conjugacy along these leaves and using Riemann removability 
Theorem in $\CC^2$. J.-Ch. Yoccoz and 
the author proved in \cite{PMY} that the set of dynamical 
conjugacy classes of holonomies is in bijection with the 
set of conjugacy classes of singularities in the Siegel domain. The rational case was previously treated  by 
J. Martinet and J.-P. Ramis (\cite{MR1}, \cite{MR2}) by identifying the conjugacy invariants. 
This establish a full dictionary of the two problems. 
In particular, an interesting corollary is that Brjuno diophantine condition is optimal for analytic linearization of 
the singularity.

\medskip

For our problem, the existence of non-monodromic solutions vanishing with $x$ when $x\to 0$ following an 
appropriate path is equivalent to finding a leave that accumulates the singularity $(0,0)$ but distinct 
from the Briot and Bouquet leaves $\{x=0\}$ and $\{y=0\}$ and a path $\gamma$ on this leave converging to $(0,0)$. 
This path $\gamma$ projects properly in the $\{y=0\}$ plane into a spiral around $(0,0)$ and converging to 
$(0,0)$. The path $\gamma$ is homothopic in the leave to a path above $C(0,|x_0|)$ such that the iterates 
of the return map converve to $(x_0,0)$.  Since $\pi_1(\CC^*) \approx \ZZ$, 
this gives an orbit of the return map that has a positive or negative orbit converging to the indifferent 
fixed point. Conversely, if we have such an orbit of the return map, we can push homothopicaly the path in the 
leave close to $\{x=0\}$ to make it converging to $(0,0)$ (just using continuity of the foliation).

\begin{proposition}
 When $\a \in \RR_+-\QQ$, Briot and Bouquet non-existence of non-monodromic solutions vanishing at $0$ is equivalent to 
 the existence of an orbit distinct from $0$ that converges to $0$ by iteration by the return map $f$ or $f^{-1}$.
\end{proposition}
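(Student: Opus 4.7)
The plan is to make rigorous the dictionary sketched in the paragraphs immediately preceding the statement: non-monodromic solutions vanishing at $0$ are precisely the traces of leaves accumulating the singularity, and the accumulation of a leaf at $(0,0)$ is detected by the holonomy return map around $\{y=0\}$.

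First I would normalize. Since $\a\in\RR_+-\QQ$ is not resonant in Briot--Bouquet's sense and the singularity admits a transverse holomorphic separatrix in addition to $\{x=0\}$ (the Briot--Bouquet solution), the defining form can be taken to be
$$\a y(1+\cO(2))\,dx + x(1+\cO(2))\,dy=0,$$
so that $\{x=0\}$ and $\{y=0\}$ are the two separatrices. Fix a transversal $T=\{x=x_0\}$ for some small $x_0\neq 0$ and lift the loop $C(0,|x_0|)\subset\{y=0\}$ (traversed once in the negative sense) to nearby leaves; this produces a germ of holomorphic diffeomorphism $f\colon (T,(x_0,0))\to (T,(x_0,0))$. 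Linearizing the form along $\{y=0\}$ gives $f(z)=e^{2\pi i\a}z+\cO(z^2)$ after a holomorphic chart on $T$, so $f$ has an indifferent irrational fixed point.

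Next, the forward direction. Suppose $y(x)$ is a non-monodromic solution with $y(x)\to 0$ along some path $x(t)\to 0$; this path is necessarily non-trivial in $\pi_1(\CC^*)\cong\ZZ$, for otherwise the Cauchy existence--uniqueness theorem applied at the end-point $x=0$ (after noting that the solution extends across $0$ by the first Briot--Bouquet theorem, since $\a$ is not a positive integer) would force monodromy. The graph of $y(x)$ sits on a leaf $L$ of the foliation, and the path $\gamma\subset L$ projects onto the spiral $x(t)$. Homotoping $\gamma$ inside $L$ to a path lying above the circle $C(0,|x_0|)$ (possible because $L$ fibers properly over a punctured neighborhood of $0$ in $x$), the winding number $n\in\ZZ$ of the projected spiral determines that the endpoints of the successive lifts form the orbit $(f^n(z_0))_{n\geq 0}$, resp.\ $(f^{-n}(z_0))_{n\geq 0}$, depending on the sign of the winding; convergence of $\gamma$ to $(0,0)$ forces this orbit to converge to $(x_0,0)$, i.e.\ to the fixed point $0$ in the chart.

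For the converse, start with $z_0\neq 0$ such that $f^n(z_0)\to 0$ (the case $f^{-n}$ is symmetric). Choose the leaf $L$ through the point of $T$ corresponding to $z_0$, and concatenate the successive lifts of $C(0,|x_0|)$ from $z_0$ to $f(z_0)$, $f(z_0)$ to $f^2(z_0)$, and so on; this builds a path $\gamma\subset L$ projecting to an infinite spiral in the $x$-plane whose endpoints converge to $(x_0,0)$. Continuity of the foliation near the separatrix $\{y=0\}$ then lets one homotope $\gamma$ within $L$, keeping it close to $\{y=0\}$, so that its $x$-projection becomes a spiral converging to $0\in\CC$; the $y$-coordinate of $\gamma$ along this homotopy gives the desired non-monodromic solution vanishing as $x\to 0$.

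The main obstacle I expect is the converse step: pushing $\gamma$ homotopically toward the separatrix while controlling the $y$-coordinate. One must use that in the Siegel normal form above, $\{y=0\}$ is an invariant leaf and leaves nearby stay close to it over compact portions of $C(0,|x_0|)$; combined with the convergence $f^n(z_0)\to 0$, this gives the required uniform control. Modulo this continuity/holonomy argument, the equivalence is simply the translation $\pi_1(\CC^*)\cong\ZZ$ between winding of a spiral in $x$ and iteration of the return map.
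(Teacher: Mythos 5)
Your proposal reproduces the argument sketched in the paragraphs immediately preceding the proposition in the paper: normalization to the Siegel form with separatrices $\{x=0\},\{y=0\}$, the holonomy return map around $C(0,|x_0|)$ with multiplier $e^{2\pi i\a}$, the forward direction via the $\pi_1(\CC^*)\cong\ZZ$ correspondence between winding of the projected spiral and iteration of $f$, and the converse via homotoping the concatenated lifts within the leaf using continuity of the foliation. This is the same route the paper takes, with a bit more detail supplied on ruling out a monodromic path and on controlling the $y$-coordinate in the converse step.
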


Since linearizable dynamics don't have this property, we see that C-L. Siegel  
linearization theorem (\cite{S}, 1942) shows that Briot and Bouquet statement is 
true when $\a \in \RR_+-\QQ$ satisfies the
arithmetic linearization condition that was improved later by A.D. Brjuno (\cite{Br}) to the so called 
Brjuno's condition
$$
\sum_{n=0}^{+\infty } \frac{\log q_{n+1}}{q_n} <+\infty \ .
$$
The sequence of $(p_n/q_n)_{n\geq 0}$ are the convergents of $\a$. 
The positive answer to Briot and Bouquet question in the linearizable case, that corresponds to $H(x,y)$ 
being convergent in Dulac's notation, was already well known to Dulac in \cite{D1}.

\medskip

Indeed the non-existence of non-monodromic for singularities of differential equations 
were well understood in the linearizable case, since H. Poincar\'e \cite{Po1} 
because linearization is equivalent to the existence of a 
first integral of the system of the form (see \cite{D1} section [])
$$
I(x,y)=yx^\a H(x,y) \ .
$$

\medskip

Note also that to have non-monodromic solutions $y(x)$ that accumulate into (but not converge to) $0$ when $x\to 0$ is a simpler problem 
that is equivalent for the monodromy dynamics to have an orbit that accumulates $0$ by positive or negative iteration. This was solved in 
general in \cite{PM5} by the discovery of hedgehogs and the result that almost all points in the hedgehog for the harmonic measure have 
a dense orbit in the hedgehog.

\medskip

What remains to be elucidated for the Briot and Bouquet problem is the non-linearizable case, and more precisely the following problem:

\begin{problem}
 Let $\a \in \RR-\QQ$ and $f(z)=e^{2\pi i \a} z +\cO(z^2)$ be a germ of holomorphic diffeomorphism with an 
 indifferent fixed point at $0$. Does there exists $z_0 \not= 0$ such that 
 $$
 \lim_{n\to +\infty } f^n(z_0) =0 \ .
 $$
\end{problem}

P. Fatou was confronted to this problem in his pioneer 
study of the dynamics of rational functions (\cite{F}, 1919) without knowing the 
relation to Briot and Bouquet problem. 
About fixed points ("points doubles" in 
Fatou's terminology) of holomorphic germs, which are 
indifferent, irrational and non-linearizable, Fatou writes \cite{F} p.220-221:

\medskip

\textit{``Il reste \`a \'etudier les points doubles dont le multiplicateur est de la forme 
$e^{i\alpha}$, $\alpha$ \'etant un nombre r\'eel inconmensurable avec $\pi$. Nous ne savons que fort 
peu de choses sur ces points doubles, dont l'\'etude 
du point de vue qui nous occupe para\^\i t tr\`es difficile. ($\ldots$)
Existe-t-il alors des domaines dont les cons\'equents tendent vers le point double ?
Nous ne pouvons actuellement ni en donner d'exemple, 
ni prouver que la chose soit impossible $\ldots$''} 
\footnote{\textit{``It remains to study fixed points with a multiplier of the 
form $e^{i\alpha}$, $\alpha$ being a real number incommensurable with $\pi$. We know little about these fixed points, and 
their study from our point of view appears very hard. ($\ldots$) Are there any domain such that the positive iterates 
converge to the fixed point? We cannot give examples nor rule out this possibility.''}}

\medskip

Fatou's question is related to the question of the non-existence of wandering components of the Fatou set for rational functions. 
This was only proved in 1985 by D. Sullivan \cite{Su}. Note that we do indeed have domains (that are not Fatou components) converging 
by iteration to rational indifferent fixed point as the local analysis of the rational case shows.

%
%

\medskip

The non-existence of domains converging to an indifferent irrational fixed point 
was also conjectured by M. Lyubich in \cite{Lyu} p.73 (Conjecture 1.2), apparently unaware of Fatou's question. 
Lyubich also conjectured (Conjecture 1.5 (a) \cite{Lyu} p.77) that for any indifferent irrational non-linearizable fixed point 
there is a critical orbit that converges to the fixed point.

\medskip

The author proved in \cite{PM5} the Moussu-Dulac 
Criterium : $f$ is not linearizable if and only if 
$f$ has an orbit accumulating the fixed point $0$.
We may think that this could give support to the existence 
of a converging orbit. The discovery of hedgehogs gave new tools for the understanding of the non-linearizable
dynamics. Indeed, hedgehogs are the central tool in the final solution of all this problems:

\begin{theorem}\label{thm_main}
There is no orbit converging by positive 
or negative iteration to an indifferent irrational fixed point 
of an holomorphic map and distinct from the fixed point.
\end{theorem}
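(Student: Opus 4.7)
The plan is to deduce Theorem~\ref{thm_main} directly from Theorems~\ref{thm_unif} and~\ref{thm_oscullating} applied to a local hedgehog, after reducing to the non-linearizable case (the linearizable case being immediate). Fix $r_0 > 0$ small enough that $f$ and $f^{-1}$ are defined and univalent on $U = \DD_{r_0}$, and let $K_0 \subset \bar U$ be the associated hedgehog supplied by Theorem~\ref{thm_hedgehogs}. Suppose, for contradiction, that some $z_0 \neq 0$ satisfies $\lim_{n\to +\infty} f^n(z_0) = 0$; the case of a backward orbit converging to $0$ is handled by the parallel argument, using the ``$-$'' signs in Theorems~\ref{thm_unif} and~\ref{thm_oscullating}.

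After replacing $z_0$ by $f^N(z_0)$ for $N$ large, I may assume the forward orbit lies in $U$ and $z_0 \neq 0$ still holds (injectivity of $f$ near $0$ forbids $f^N(z_0) = 0$ unless $z_0 = 0$). The full invariance $f(K_0) = f^{-1}(K_0) = K_0$ then yields a clean dichotomy: either every iterate of $z_0$ lies in $K_0$, or none does. Indeed, $f^k(z_0) \in K_0$ with $k \geq 1$ would force $z_0 = f^{-k}(f^k(z_0)) \in f^{-k}(K_0) = K_0$. This dichotomy drives the two cases of the argument.

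If $z_0 \in K_0$, Theorem~\ref{thm_unif} yields $f^{q_n}(z_0) \to z_0$, but this subsequence is extracted from $(f^n(z_0))_n$, which converges to $0$ by hypothesis, so $z_0 = 0$, a contradiction. If instead $z_0 \in U - K_0$, the assumption $f^n(z_0) \to 0$ means the positive orbit accumulates $0 \in K_0$, so Theorem~\ref{thm_oscullating} forces it to accumulate every point of $K_0$. Since $K_0 \cap \partial U \neq \emptyset$, the orbit would then accumulate a point of modulus $r_0 > 0$, incompatible with $f^n(z_0) \to 0$.

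The genuinely hard dynamical content is packaged entirely into Theorems~\ref{thm_unif} and~\ref{thm_oscullating}; their proofs will rest on the quasi-invariant curve construction near the local hedgehog, together with classical one-dimensional estimates for smooth circle diffeomorphisms and a hyperbolic Denjoy--Yoccoz lemma controlling the complex orbits of analytic circle diffeomorphisms. Once these two theorems are in hand, the deduction of Theorem~\ref{thm_main} is essentially combinatorial: the $f$-invariance of $K_0$ gives the all-or-nothing dichotomy above, and each of the two resulting alternatives is immediately killed by one of the quoted theorems.
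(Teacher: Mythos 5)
Your proposal reproduces the paper's own deduction of Theorem~\ref{thm_main} from Theorems~\ref{thm_unif} and~\ref{thm_oscullating}: the same reduction to the non-linearizable case, the same dichotomy (either the orbit lies in $K_0$ and Theorem~\ref{thm_unif} gives recurrence, or it lies in $U-K_0$ and Theorem~\ref{thm_oscullating} forces it to accumulate $K_0\cap\partial U$), with the backward case handled symmetrically. The argument is correct and your only addition is to make explicit the ``all-or-nothing'' invariance observation and the replacement of $z_0$ by $f^N(z_0)$ to land in $U$, both of which the paper leaves implicit.
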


Therefore, the Briot and Bouquet problem has a positive solution in the irrational case. The questions of Dulac, 
Picard, Fatou are solved. Lyubich's Conjecture 1.2 in \cite{Lyu} has a positive answer, but conjecture 1.5 (a) in \cite{Lyu}
is false: For a generic rational function, there is no critical point converging to an indifferent irrational non-linearizable 
periodic orbits. There may be pre-periodic critical points to this orbit, but this is clearly non-generic. We may formulate a 
proper conjecture that has better chances to hold true:

\begin{conjecture}
Let $f$ be a rational function of degree $2$ or more, with an indifferent irrational non-linearizable fixed point $z_0$. 
There exists a critical point $c_0$ of $f$, such that 
$$
\lim_{n\to +\infty} \frac1n \sum_{j=0}^{n-1} \delta_{f^j(c_0)} \to \delta_{z_0} \ .
$$
\end{conjecture}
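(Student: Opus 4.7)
The conjecture is a quantitative, measure-theoretic statement about time-averages of critical orbits, and the plan is to couple the local hedgehog theorems of this paper with global rational dynamics. The starting point is to exhibit a candidate critical point $c_0$: since $z_0$ is non-linearizable it lies in the Julia set of $f$, and classical Fatou--Ma\~n\'e type arguments force the $\omega$-limit set of at least one critical point $c_0$ of $f$ to contain $z_0$. Applying Theorem~\ref{thm_hedgehogs} one obtains a local hedgehog $K_0$ at $z_0$. Since the positive orbit $(f^n(c_0))_{n\geq 0}$ accumulates $z_0 \in K_0$, Theorem~\ref{thm_oscullating} then forces $K_0 \subset \overline{\{f^n(c_0):n\geq 0\}}$, upgrading the topological accumulation from $\{z_0\}$ to all of $K_0$.

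The core of the proposed argument is to promote this topological accumulation into the quantitative weak-$*$ convergence $\frac{1}{n}\sum_{j=0}^{n-1}\delta_{f^j(c_0)}\to \delta_{z_0}$. The plan is to exploit Theorem~\ref{thm_unif}: along the convergents $p_n/q_n$ of $\alpha$, the iterate $f^{q_n}$ is arbitrarily close to the identity on $K_0$, and by the hyperbolic Denjoy--Yoccoz-type estimates mentioned in the introduction this should propagate to a Poincar\'e-metric neighborhood of $K_0$ inside $\CC-K_0$. Hence any piece of orbit entering a very small disk around $z_0$ should be trapped in a definite neighborhood of $z_0$ for times of order $q_n$ before drifting along $K_0$. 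Because $\alpha$ is non-Brjuno the $q_n$ grow extremely fast, so if the trapping estimate can be made quantitative, one would recover that the fraction of time the critical orbit spends inside the $\varepsilon$-ball around $z_0$ tends to $1$ for every $\varepsilon>0$, which is exactly the weak-$*$ statement of the conjecture.

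The main obstacle, and the reason the statement remains a conjecture, is this second step. For a point of $K_0$ distinct from $z_0$, Theorem~\ref{thm_unif} shows that the positive orbit is uniformly recurrent with no preference for $z_0$, so the argument must genuinely use that $c_0 \notin K_0$ and that the orbit approaches $z_0$ only through the complement $\CC-K_0$, where the Poincar\'e metric degenerates precisely at $z_0$. One must also rule out the competing scenario in which the critical orbit equidistributes against the harmonic measure on $K_0$, as holds for harmonic-measure-typical starting points on $K_0$ by the author's earlier work; this seems to require a new ingredient connecting critical values of the rational map $f$ to the conformal geometry of $\CC-K_0$ near $z_0$, and thus to go beyond the local quasi-invariant curve construction that suffices for the main Theorem~\ref{thm_main}. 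A natural first test case is the quadratic family $z\mapsto e^{2\pi i\alpha}z+z^2$ with $\alpha$ Cremer, where there is a single free critical point and the question reduces to controlling its time-averaged distribution.
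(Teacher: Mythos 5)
The statement you were asked to address is a \emph{conjecture} in the paper, not a theorem: the author explicitly proposes it as an open replacement for Lyubich's Conjecture~1.5(a), which the main Theorem of the paper refutes. The paper gives no proof, so there is nothing internal to compare your argument against. You correctly recognize this, and what you have written is not a proof but an honest sketch of a strategy together with the reasons it does not close. That is the right posture for this statement.

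A few remarks on the content of your sketch. Your first step (producing a critical point $c_0$ whose $\omega$-limit set contains $z_0$) is indeed classical --- this is Ma\~n\'e's theorem for Cremer points and needs no new ideas. Your use of Theorem~\ref{thm_oscullating} to upgrade accumulation at $z_0$ to accumulation on all of $K_0$ is exactly what that theorem is designed for, and your identification of the main obstacle is accurate: Theorem~\ref{thm_unif} only gives uniform recurrence on $K_0$ with no bias toward $z_0$, and the competing scenario of equidistribution toward harmonic measure on $K_0$ (which governs harmonic-measure-typical points of $\partial K_0$ by the author's earlier work) is precisely what a proof would have to exclude. You are also right that this seems to require global information connecting the critical value structure of $f$ to the conformal geometry of $\CC\setminus K_0$ near $z_0$, which is not accessible through the purely local quasi-invariant curve machinery of Sections~\ref{sec_quasi}--\ref{sec_osc}.

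One imprecision worth flagging: non-linearizability does force $\alpha$ to fail the Brjuno condition (by Brjuno's linearization theorem), but failure of Brjuno does \emph{not} mean ``the $q_n$ grow extremely fast''; it only means $\sum \log q_{n+1}/q_n$ diverges, which is compatible with the $q_n$ growing at any rate you like along most of the sequence. So the trapping-time heuristic in your second paragraph cannot rest on universal fast growth of the $q_n$; any genuine argument would have to exploit the structure of the orbit near the hedgehog more carefully, presumably through the Poincar\'e metric degeneration at $z_0$ that you mention. As it stands, the conjecture remains open, and your write-up is a reasonable account of why.
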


\medskip

Theorem \ref{thm_main} was announced in \cite{PM2} and a complete proof was given in the unpublished manuscript \cite{PM6}.
The proof given here concentrates on this particular Theorem and the solution of Briot and Bouquet problem, and 
not the many other properties of general hedgehog's dynamics. The proof follows the same lines as in \cite{PM6}, but  
we have incorporated several new ideas that greatly improve and simplify the technical part of construction 
of quasi-invariant curves 
that are fundamental in the study of the hedgehog dynamics. It was recently noticed in \cite{PM7} an hyperbolic 
interpretation of Denjoy-Yoccoz Lemma that controls orbits of an analytic circle diffeomorphism $g$ in a complex 
neighborhood of the circle. Then, when we control the non-linearity $||D\log Dg||_{C^0}$ of $g$,
we can construct directly the quasi-invariant curves without complex renormalization.
The second observation if that in the proof of Theorem \ref{thm_main} we can work with local hedgehogs (small 
hedgehogs). Then the associated circle diffeomorphism has a small non-linearity and the construction of quasi-invariant curves is easier. 

\newpage

\section{Analytic circle diffeomorphisms.}

\subsection{Notations.}

We denote by $\TT =\RR/\ZZ$ the abstract circle, and $\SS^1 =
E (\TT )$ its embedding in the complex plane $\CC$ given by 
the exponential mapping $E(x)=e^{2\pi i x}$.

We study analytic diffeomorphisms of the circle, but we 
prefer to work at the level of the universal covering, the real line, with 
its standard embedding $\RR \subset \CC$. We denote by $D^\omega (\TT)$ 
the space of non decreasing analytic diffeomorphisms $g$ of the real line 
such that, for any $x\in \RR$, $g(x+1)=g(x)+1$, which is the commutation to the 
generator of the deck transformations $T(x)=x+1$. An element of the space 
$D^\omega (\TT)$ has a well defined rotation number $\rho (g)\in \RR$. The order preserving 
diffeomorphism $g$ is conjugated to the rigid translation $T_{\rho(g)}: x\mapsto x+\rho(g)$, 
by an orientation preserving homeomorphism $h:\RR \to \RR$, such that $h(x+1)=h(x)+1$.

For $\Delta >0$, we note $B_{\Delta} =\{ z\in \CC ; |\Im z | < \Delta \}$, and 
$A_\Delta = E(B_\Delta )$. The subspace $D^\omega (\TT, \Delta )\subset D^\omega (\TT )$
is composed by the elements of $D^\omega (\TT)$ which extend analytically to 
a holomorphic diffeomorphism, denoted again by $g$, such that $g$ and $g^{-1}$
are defined on a neighborhood of $\bar B_\Delta$.

\subsection{Real estimates.}

We refer to \cite{Yo3} for the results on this section. We assume that the orientation preserving 
circle diffeomorphism $g$ is $C^3$
and that the rotation number $\alpha =\rho (g)$ is irrational. 
We consider the convergents $(p_n /q_n)_{n\geq 0}$ of $\alpha$ obtained by the continued fraction algorithm (see \cite{HW} for 
notations and basic properties of continued fractions). 

\medskip

For $n\geq 0$, we define the map $g_n (x) =g^{q_n}(x)-p_n$ and the intervals $I_n(x)=[x, g_n(x)]$,
$J_n(x)=I_n(x) \cup I_n(g_n^{-1}(x)) =[g_n^{-1}(x), g_n(x)]$. Let 
$m_n (x)=g^{q_n} (x)-x-p_n=\pm |I_n(x)|$,  $M_n =\sup_{\RR} |m_n (x)|$, and $m_n =\min_{\RR} |m_n (x)|$. 
Topological linearization is equivalent to $\lim_{n\to +\infty } M_n =0$. This is always true 
for analytic diffeomorphisms by Denjoy's Theorem, that holds for $C^1$ diffeomorphisms
such that $\log Dg$ has bounded variation. 

\medskip

Since $g$ is topologically linearizable, combinatorics of the irrational translation (or the continued fration algorithm) shows:

\begin{lemma} \label{lem_comb}
Let $x\in \RR$, $0\leq j < q_{n+1}$ and $k\in \ZZ$ the intervals $g^j\circ T^k(I_n(x))$
have disjoint interiors, and the intervals $g^j\circ T^k(J_n(x))$ cover $\RR$ at most twice.
\end{lemma}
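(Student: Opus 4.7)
The plan is to reduce both statements to the analogous statements for the rigid rotation $T_\alpha$, and then apply the best-approximation property of continued fractions.

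First I would invoke the topological conjugacy: since $g$ has irrational rotation number $\alpha$ and $g\in D^\omega(\TT)$ is in particular Denjoy, there is an orientation-preserving homeomorphism $h:\RR\to\RR$ with $h(x+1)=h(x)+1$ and $h\circ g=T_\alpha\circ h$. Being order-preserving and commuting with $T$, $h$ sends intervals to intervals and respects both interior disjointness and pointwise covering multiplicity. Hence it carries the family $\{g^j\circ T^k(I_n(x))\}$ to $\{T_\alpha^j\circ T^k(h(I_n(x)))\}$, and similarly for $J_n$, so it suffices to prove the lemma assuming $g=T_\alpha$.

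For the rotation, $I_n(x)$ is the interval with endpoints $x$ and $x+q_n\alpha-p_n$, of length $|q_n\alpha-p_n|$, and $T_\alpha^j\circ T^k(I_n(x))$ is simply the translate of $I_n(x)$ by $j\alpha+k$. Interior overlap of two such translates with $(j_1,k_1)\ne(j_2,k_2)$ and $0\le j_i<q_{n+1}$ would force, taking $j_2>j_1$, the existence of an integer $m$ with
$$|(j_2-j_1)\alpha-m|<|q_n\alpha-p_n|,\quad 0<j_2-j_1<q_{n+1}.$$
This contradicts the defining property of $p_n/q_n$ as the best rational approximation of $\alpha$ with denominator strictly less than $q_{n+1}$, which gives the first assertion. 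For the second, write $J_n(x)=I_n(g_n^{-1}(x))\cup I_n(x)$, so $g^j\circ T^k(J_n(x))$ splits as the union of intervals from two families. The first family is controlled by the first assertion. Using $g_n=T^{-p_n}\circ g^{q_n}$, whence $g_n^{-1}=g^{-q_n}\circ T^{p_n}$, together with the commutation $g\circ T=T\circ g$, one gets
$$g^j\circ T^k(I_n(g_n^{-1}(x)))=g^{j-q_n}\circ T^{k+p_n}(I_n(x)),$$
so the second family is a translation family whose index $j'=j-q_n$ runs over $q_{n+1}$ consecutive integers. The same best-approximation argument, which only uses that the index window has length strictly less than $q_{n+1}$, gives interior disjointness within this second family as well. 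Each of the two families therefore covers any $y\in\RR$ at most once, and their union covers at most twice.

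The main obstacle I anticipate is purely bookkeeping: tracking signs and orientations (since $q_n\alpha-p_n$ alternates sign with $n$ and $I_n(x)$ inherits this orientation) and verifying the commutation identity above without confusing the index ranges. The combinatorial heart of the argument, the best-approximation property of convergents, is classical and immediate.
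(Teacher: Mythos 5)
Your approach is the same one the paper gestures at (the paper offers no proof, only the remark that the lemma follows from topological conjugacy to a rotation plus continued-fraction combinatorics), and your detailed version is correct as far as it goes. The reduction to $T_\alpha$ via the Denjoy conjugacy is sound, the best-approximation argument for interior-disjointness of the $I_n$-family is right (note that even when $|j_1-j_2|=q_n$ the approximation bound is only an equality, which still forbids \emph{interior} overlap), and the identity
$$
g^j\circ T^k\bigl(I_n(g_n^{-1}(x))\bigr)=g^{j-q_n}\circ T^{k+p_n}\bigl(I_n(x)\bigr)
$$
cleanly splits the $J_n$-family into two translated $I_n$-families each indexed by a window of $q_{n+1}$ consecutive integers, giving multiplicity at most $2$.

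There is one genuine gap: you prove only that the multiplicity of the $J_n$-family is at most two, not that these intervals actually \emph{cover} $\RR$, and the covering is part of the claim and is what gets used later (the proof of Lemma~\ref{lem_cover} invokes ``these pieces cover $\Phi^{(n)}_{z_0}$''). To close it, stay in the rotation picture and use the gap structure of the $q_{n+1}$-point orbit $\{x+j\alpha+k\}$: writing $\beta_m=|q_m\alpha-p_m|$, the identity $q_{n+1}\beta_n+q_n\beta_{n+1}=1$ shows the gaps between consecutive orbit points are $\beta_n$ (with multiplicity $q_{n+1}-q_n$ per period) and $\beta_n+\beta_{n+1}$ (with multiplicity $q_n$). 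Each $J_n$-interval is the $\beta_n$-neighborhood of its base orbit point on both sides, so a gap of size $\beta_n$ is covered by a single $J_n$, while a gap of size $\beta_n+\beta_{n+1}<2\beta_n$ is covered by the two $J_n$'s based at its endpoints. Transporting back through $h^{-1}$ gives the covering for $g$. With this added, your proof is complete.
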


\medskip

We have the following estimates on the Schwarzian 
derivatives of the iterates of $f$, for $0\leq j\leq q_{n+1}$,
$$
\left |S g^j (x)\right | \leq \frac{M_n e^{2V}S}{|I_n(x)|^2} \ ,
$$
with $S=||Sg||_{C^0(\RR )}$ and $V=\hbox{\rm {Var}} \log Dg$.

This implies a control of the non-linearity of the iterates (Corollary 3.18 in \cite{Yo3}): 

\begin{proposition}
For $0\leq j\leq 2q_{n+1}$, $c=\sqrt{2S} e^V$,
$$
|| D \log Dg^j ||_{C^0(\RR )} \leq c \, \frac{M_n^{1/2}}{m_n} \ .
$$
\end{proposition}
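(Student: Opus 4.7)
The identity defining the Schwarzian, $Sg^{j}=D^{2}\log Dg^{j}-\tfrac{1}{2}(D\log Dg^{j})^{2}$, rearranges into the Riccati-type relation
\begin{equation*}
D\phi_j \;=\; Sg^{j} \;+\; \tfrac{1}{2}\phi_j^{2},\qquad \phi_j := D\log Dg^{j},
\end{equation*}
which I would adopt as the principal vehicle of the proof. The preceding Schwarzian estimate together with the trivial inequality $|I_n(t)|\ge m_n$ yields the pointwise bound $|Sg^{j}(t)|\le F:=SM_{n}e^{2V}/m_{n}^{2}$, so $\phi_j$ satisfies the scalar differential inequality $\phi_j'\ge \tfrac12\phi_j^{2}-F$ on all of $\RR$. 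Moreover $\phi_j$ is analytic and $1$-periodic (since $g^{j}$ commutes with the deck transformation $T$), hence bounded on $\RR$.

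The plan is a comparison-with-blow-up argument targeted at the sharp critical threshold $\sqrt{2F}=cM_n^{1/2}/m_n$, the constant $c=\sqrt{2S}\,e^{V}$ being chosen precisely so that $2F=c^{2}M_n/m_n^{2}$. Assume, for contradiction, that $\phi_j(x_0)>\sqrt{2F}$ at some $x_0\in\RR$; the symmetric case $\phi_j(x_0)<-\sqrt{2F}$ reduces to this one by replacing $\phi_j(x)$ with $-\phi_j(-x)$, which satisfies the same differential inequality. Let $y(t)$ solve the autonomous ODE $y'=y^{2}/2-F$ with $y(0)=\phi_j(x_0)$. As long as $y(t)\ge\sqrt{2F}$ the right-hand side is non-negative, so the standard comparison principle gives $\phi_j(x_0+t)\ge y(t)$; an elementary separation of variables shows that $y$ blows up to $+\infty$ in finite time $t^{\ast}<\infty$, forcing $\phi_j$ to be unbounded on $[x_0,x_0+t^{\ast}]$, a contradiction with its periodic continuity. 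Hence $|\phi_j|\le\sqrt{2F}=cM_n^{1/2}/m_n$, valid in the range of $j$ where the Schwarzian estimate has been established.

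To pass from $j\le q_{n+1}$ to the full range $j\le 2q_{n+1}$, I would invoke the cocycle identity $\phi_{j+k}=(\phi_j\circ g^{k})\cdot Dg^{k}+\phi_k$, splitting $j+k$ with $j,k\le q_{n+1}$ and using the distortion estimate $||\log Dg^{k}||_{C^{0}}\le ||\phi_k||_{C^{0}}$ inherited from the previous step to control $Dg^{k}$, the resulting constants being reabsorbed into $c$; alternatively one may re-derive the Schwarzian bound directly on the extended range by applying Lemma \ref{lem_comb} twice. The main obstacle is the identification of the correct critical threshold $\sqrt{2F}$ in the Riccati comparison: the whole point of the particular constant $c=\sqrt{2S}\,e^{V}$ is that it matches the $e^{2V}SM_n/m_n^{2}$ bound on $|Sg^{j}|$ exactly, and once that matching is recognised the blow-up mechanism is essentially a one-line ODE computation.
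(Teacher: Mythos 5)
The paper does not prove this statement; it cites Corollary 3.18 of \cite{Yo3}. Your central idea --- the Riccati identity $D\phi_j = Sg^{j} + \tfrac12\phi_j^{2}$ for $\phi_j := D\log Dg^{j}$, fed by the Schwarzian estimate --- is exactly the right mechanism and does produce the claimed threshold $\sqrt{2F}=c\,M_n^{1/2}/m_n$. But the finite-time blow-up comparison is heavier machinery than needed: since $\phi_j$ is $1$-periodic and $C^{1}$, it attains its maximum at some $x^{\ast}$ where $D\phi_j(x^{\ast})=0$, so the identity gives directly $\phi_j(x^{\ast})^{2}=-2\,Sg^{j}(x^{\ast})\leq 2M_ne^{2V}S/|I_n(x^{\ast})|^{2}\leq 2Se^{2V}M_n/m_n^{2}$, and similarly at the minimum. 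That one line is your argument with the escape-to-infinity replaced by the first-derivative test, and it is Yoccoz's argument.

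The part that does not close as written is the extension from $j\leq q_{n+1}$ to $j\leq 2q_{n+1}$, and it is a genuine gap if one cares about the stated constant $c=\sqrt{2S}\,e^{V}$. Writing $g^{j}=g^{k}\circ g^{q_{n+1}}$, the cocycle relation $\phi_j(x)=\phi_k(g^{q_{n+1}}x)\,Dg^{q_{n+1}}(x)+\phi_{q_{n+1}}(x)$ together with the Denjoy bound $\|Dg^{q_{n+1}}\|_{C^{0}}\leq e^{V}$ yields at best $(1+e^{V})\,c\,M_n^{1/2}/m_n$. The split in the opposite order, which is what your proposed estimate $\|\log Dg^{k}\|_{C^{0}}\leq\|\phi_k\|_{C^{0}}$ seems aimed at, is worse: $\|\phi_k\|_{C^{0}}$ is of size $M_n^{1/2}/m_n\to\infty$, so this does not control $Dg^{k}$ at all. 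Your second alternative --- re-running the Schwarzian estimate over the doubled range using that the intervals $g^{l}(I_n(x))$, $0\leq l<2q_{n+1}$, cover the circle at most twice --- gives $|Sg^{j}(x)|\leq 2M_ne^{2V}S/|I_n(x)|^{2}$ and hence $\sqrt{2}\,c$. Either route proves the inequality only with a strictly larger constant than the advertised $c$. For the use made of this proposition in the present paper the loss is harmless (only the order $M_n^{1/2}/m_n$ is exploited downstream), but your write-up should acknowledge that the constant it delivers on $q_{n+1}<j\leq 2q_{n+1}$ is not the one in the statement.
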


These give estimates on $g_n$. More precisely we have (Corollary 3.20 in \cite{Yo3}):

\begin{proposition} \label{prop_estimate} For some constant $C >0$, we have
 $$
 ||\log Dg_n ||_{C^0(\RR )}\leq C M_n^{1/2} \ .
 $$
\end{proposition}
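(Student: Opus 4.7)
The plan is to pass from the non-linearity bound $\|D \log Dg_n\|_{C^0(\RR)} \leq c M_n^{1/2}/m_n$ just established to a pointwise bound on $\log Dg_n$ itself. The idea is to locate, near any prescribed point $y \in \RR$, a nearby point where $\log Dg_n$ is already of size $O(\sqrt{M_n})$, and then to integrate the derivative bound over a distance of order $m_n$; the factor $m_n$ will exactly cancel the problematic $1/m_n$ in the derivative bound.

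To produce such a nearby point, I would use that $g_n=g^{q_n}-p_n$ commutes with the unit translation (since $p_n\in\ZZ$ and $g$ commutes with $T$), so $Dg_n$ is $1$-periodic and
$$
\int_0^1 Dg_n(t)\,dt \;=\; g_n(1)-g_n(0) \;=\; 1.
$$
Applying the mean value theorem on $I_n(y)=[y,g_n(y)]$ yields $\xi=\xi(y)\in I_n(y)$ with
$$
Dg_n(\xi) \;=\; \frac{g_n^2(y)-g_n(y)}{g_n(y)-y} \;=\; \frac{|I_n(g_n(y))|}{|I_n(y)|}.
$$
I would then argue that $|I_n(g_n(y))|/|I_n(y)|=1+O(\sqrt{M_n})$: this is a bounded distortion statement for $g^{q_n}$ on the interval $J_n(y)$ of length at most $2M_n$, obtained by combining the Schwarzian estimate $|Sg^{q_n}(x)|\leq M_ne^{2V}S/|I_n(x)|^2$ recalled in the previous subsection with the classical one-dimensional Koebe distortion theorem. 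In parallel, Denjoy's theorem for analytic circle diffeomorphisms gives the coarser bound $M_n/m_n\leq K$. Together these yield $|\log Dg_n(\xi)|\leq C_1\sqrt{M_n}$.

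Finally, for any $z\in I_n(y)$ we have $|z-\xi|\leq |I_n(y)|\leq M_n\leq K m_n$, so the previous proposition gives
$$
|\log Dg_n(z)-\log Dg_n(\xi)| \;\leq\; \|D\log Dg_n\|_{C^0}\cdot|z-\xi| \;\leq\; \frac{cM_n^{1/2}}{m_n}\cdot K m_n \;=\; cK\sqrt{M_n}.
$$
Combined with the bound on $|\log Dg_n(\xi)|$, this gives $|\log Dg_n(z)|\leq (C_1+cK)\sqrt{M_n}$ on each $I_n(y)$, and as $y$ varies these intervals cover $\RR$, proving the proposition. The main obstacle is the refined distortion estimate $|I_n(g_n(y))|/|I_n(y)|=1+O(\sqrt{M_n})$ in the middle paragraph: upgrading mere boundedness of $M_n/m_n$ (Denjoy) to a quantitative $\sqrt{M_n}$-convergence of adjacent $I_n$-lengths is the critical Yoccoz--Koebe estimate, of essentially the same technical depth as the non-linearity bound of the preceding proposition, and handled by the same Schwarzian-plus-Koebe machinery in \cite{Yo3}.
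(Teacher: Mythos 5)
The paper gives no argument here; the Proposition is a verbatim citation of [Yo3, Corollary~3.20], so there is no proof in the paper to compare your reconstruction against. Your outer reduction is nonetheless sound: an anchor $\xi\in I_n(y)$ with $|\log Dg_n(\xi)|=O(\sqrt{M_n})$, the nonlinearity bound of the preceding Proposition, and the bounded-geometry fact $M_n\leq Km_n$ together give $|\log Dg_n|\lesssim\sqrt{M_n}$ on all of $I_n(y)$ at cost $\|D\log Dg_n\|\cdot|I_n(y)|\lesssim (M_n^{1/2}/m_n)M_n\lesssim\sqrt{M_n}$, and the intervals $I_n(y)$ cover $\RR$; that logic is correct. (The identity $\int_0^1Dg_n\,dt=1$ is true but plays no role as written, since your mean value theorem is applied on $I_n(y)$ and not on $[0,1]$.)

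The gap is in producing the anchor. By that same mean value theorem, $Dg_n(\xi)=|I_n(g_n(y))|/|I_n(y)|$, so the claim that this ratio is $1+O(\sqrt{M_n})$ is \emph{exactly} the Proposition's conclusion at the point $\xi$; as $y$ varies these points $\xi$ are $M_n$-dense in $\RR$, so the claim is not a strictly weaker, independently accessible input. The tools you invoke do not deliver it at face value: on $J_n(y)$, of length $\lesssim M_n$, the Schwarzian bound $|Sg^{q_n}|\lesssim M_n/|I_n(\cdot)|^2\sim 1/M_n$ integrates only to $O(1)$, so a local Koebe/cross-ratio argument on one pair of adjacent $I_n$-intervals yields $O(1)$ distortion control, not $\sqrt{M_n}$-smallness; and the nonlinearity bound controls only the \emph{variation} of $\log Dg_n$ across $I_n(y)$ (which is indeed $O(\sqrt{M_n})$), not its value. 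You rightly flag this step as the crux, but as written the proposal reduces the Proposition to an equivalent unproved statement rather than establishing it; what is missing is the genuinely global part of Yoccoz's chain of estimates, which exploits the combinatorics of the whole dynamical partition rather than the local Koebe space around a single $I_n$-interval.
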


\begin{corollary}  \label{cor_1}
For any $\eps>0$, there exists $n_0\geq 1$ such that for $n\geq n_0$,  we have
$$
 ||Dg_n -1||_{C^0(\RR )}\leq \eps \ .
 $$
\end{corollary}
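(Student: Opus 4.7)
The plan is to combine Proposition \ref{prop_estimate} with the Denjoy property $M_n\to 0$ and the elementary continuity of the exponential at the origin.

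First I would note that since $g\in D^\omega(\TT)$ is an analytic (hence $C^3$) circle diffeomorphism with irrational rotation number, Denjoy's Theorem applies: $g$ is topologically conjugate to the rigid rotation $T_\alpha$, which, as observed in the text just before Lemma \ref{lem_comb}, is equivalent to $\lim_{n\to +\infty} M_n = 0$. This is the only ``nontrivial'' input, and it is already invoked in the paper's discussion.

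Next, given $\eps>0$, I would choose $\delta>0$ small enough that $|u|\leq \delta$ implies $|e^u - 1|\leq \eps$ (for instance $\delta = \log(1+\eps)$ suffices). Using $M_n\to 0$, pick $n_0\geq 1$ such that for all $n\geq n_0$ one has $C M_n^{1/2} \leq \delta$, where $C$ is the constant from Proposition \ref{prop_estimate}. Then by that proposition,
$$
\|\log Dg_n\|_{C^0(\RR)} \leq C M_n^{1/2} \leq \delta.
$$
Since $Dg_n(x) = \exp(\log Dg_n(x))$ and $|\log Dg_n(x)|\leq \delta$ for every $x\in\RR$, the choice of $\delta$ gives
$$
|Dg_n(x) - 1| = |e^{\log Dg_n(x)} - 1| \leq \eps,
$$
uniformly in $x\in\RR$, which is the desired conclusion.

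There is essentially no obstacle: the estimate $\|\log Dg_n\|_{C^0}\leq C M_n^{1/2}$ already does the real work, and the corollary is just a matter of exponentiating and invoking $M_n\to 0$ for analytic $g$. The only point that requires a moment's attention is to use $\log Dg_n$ (which is well defined since $g_n$ is an orientation-preserving diffeomorphism, so $Dg_n>0$) rather than trying to estimate $Dg_n - 1$ directly.
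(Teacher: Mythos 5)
Your proof is correct and follows essentially the same route as the paper: invoke Proposition~\ref{prop_estimate} together with $M_n \to 0$ (Denjoy), then pass from a smallness bound on $\log Dg_n$ to one on $Dg_n - 1$ by an elementary estimate on the exponential near $0$. The only cosmetic difference is the choice of that elementary estimate --- you take $\delta = \log(1+\eps)$, while the paper uses $|e^w - 1| \leq \tfrac{3}{2}|w|$ for $|w| < \tfrac12$ --- and both work.
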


\begin{proof}
Take $n_0 \geq 1$ large enough so that for $n\geq n_0$, $C M_n^{1/2} <\min (\frac{2}{3}\eps, \frac12) $, then use Proposition \ref{prop_estimate} and 
$\left |e^w-1 \right | \leq \frac{3}{2} |w|$ for $|w|<1/2$.
\end{proof}

\begin{corollary}\label{cor_estimate}
For any $\eps>0$, there exists $n_0\geq 1$ such that for $n\geq n_0$, for any $x\in \RR$ and $y\in I_n(x)$ we have
$$
1-\eps \leq \frac{m_n(y)}{m_n(x)}\leq 1+\eps \ .
$$
\end{corollary}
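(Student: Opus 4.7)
The plan is to express $m_n(y) - m_n(x)$ as an integral of $Dg_n - 1$ and then invoke Corollary \ref{cor_1} to bound this integrand uniformly, exploiting the fact that the length of $I_n(x)$ equals exactly $|m_n(x)|$.

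More precisely, I would start from the definition $m_n(t) = g_n(t) - t$ and write, for any $y \in I_n(x) = [x, g_n(x)]$,
$$
m_n(y) - m_n(x) = \bigl(g_n(y) - g_n(x)\bigr) - (y - x) = \int_x^y \bigl(Dg_n(t) - 1\bigr)\, dt.
$$
Given any $\eps > 0$, Corollary \ref{cor_1} yields $n_0$ such that $\|Dg_n - 1\|_{C^0(\RR)} \leq \eps$ for all $n \geq n_0$. Next, since $I_n(x)$ is the interval with endpoints $x$ and $g_n(x)$, its length is $|g_n(x) - x| = |m_n(x)|$, hence $|y - x| \leq |m_n(x)|$. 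Combining these two facts,
$$
|m_n(y) - m_n(x)| \leq \eps \, |y - x| \leq \eps \, |m_n(x)|.
$$
Since $g$ is conjugate to an irrational rotation it has no periodic points, so $m_n(x) \neq 0$, and dividing gives $\bigl|\tfrac{m_n(y)}{m_n(x)} - 1\bigr| \leq \eps$, which is the claimed two-sided bound. (Note that for $\eps < 1$ the same estimate forces $m_n(y)$ and $m_n(x)$ to have the same sign, so no issue arises in passing to the ratio.)

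There is essentially no obstacle here: the statement is a direct consequence of Corollary \ref{cor_1} together with the trivial geometric observation $|I_n(x)| = |m_n(x)|$. The only point requiring the slightest care is that $y - x$ is compared to $m_n(x)$ (not $M_n$ or $m_n$), which is what makes the quotient $m_n(y)/m_n(x)$, rather than some absolute difference, the natural object to bound.
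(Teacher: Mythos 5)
Your argument is essentially identical to the paper's: both bound $m_n(y)-m_n(x)$ by $\|Dg_n-1\|_{C^0(\RR)}\,|y-x|$ (you via an integral, the paper via the mean-value inequality for $Dm_n = Dg_n - 1$), use $|y-x| \le |m_n(x)|$ since $y \in I_n(x)$, and then invoke Corollary \ref{cor_1}. Correct, and the same route.
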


\begin{proof}
 We have $D m_n(x)=Dg_n(x)-1$, and 
 $$
 \left | m_n(y)-m_n(x)\right | \leq ||Dm_n ||_{C^0(\RR )} |y-x| \leq ||Dg_n -1||_{C^0(\RR )} |m_n(x)| \ .
 $$
 We conclude using Lemma \ref{cor_1}.
\end{proof}

\newpage

\section{Hyperbolic Denjoy-Yoccoz Lemma.}

With these real estimates for the iterates, and, more precisely, a control on the non-linearity, 
we can use them to control orbits in a complex neighborhood. 
We give here a version of Denjoy-Yoccoz lemma (Proposition 4.4 in \cite{Yo3})
that is convenient for our purposes.

\medskip

Given $\D >0$, we consider $g\in D^\omega (\TT , \D)$ such that $\inf_{B_\D} \Re Dg >0$ so that $\log Dg$ is a well 
defined univalued holomorphic function in $B_\D$. Given $g\in D^\omega (\TT )$ we get always 
this for a $\D >0$ small
enough (as in \cite{Yo3}), but here we don't need to make the assumption that for a given $g$,  $\D$ is small enough. 

\medskip

We do assume that we have a small non-linearity in $B_\D$, more precisely,
$$
\tau=|| D\log Dg ||_{C^0(B_\D)} <1/9 \ .
$$

\begin{lemma}\label{lem_DenjoyYoccoz}
Let $n_0 \geq 1$ large enough such that for all $n\geq n_0$, $M_n <\D/2$.

For $x_0\in \RR$, let $0<y_0 \leq 1$ and 
$$
z_0= x_0+im_n(x_0) y_0 \ .
$$
Then for $0\leq j\leq q_{n+1}$, $y_j\in \CC$, $\Re y_j>0$, is well defined by
$$
z_j=g^j(z_0)=g^j(x_0) + i m_n(g^j(x_0)) y_j \ ,
$$
and we have
$$
|y_j-y_0|\leq \frac{3}{4} \, y_0 \ .
$$
\end{lemma}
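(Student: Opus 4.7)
The plan is to parametrize the segment from $x_0$ to $z_0$ by $z(y)=x_0+iy\,m_n(x_0)$ for $y\in[0,y_0]$ and run a bootstrap in $j$ for the entire one-parameter family rather than only for $z_0$. When $g^k(z(y))$ is defined and lies in $B_\D$, let $\tilde y_k(y)\in\CC$ be determined by $g^k(z(y))=g^k(x_0)+i\,m_n(g^k(x_0))\,\tilde y_k(y)$, so $\tilde y_k(0)=0$ and $\tilde y_j(y_0)=y_j$. I will prove by induction on $j\in\{0,\ldots,q_{n+1}\}$ that for every $y\in[0,y_0]$ the iterate $g^j(z(y))$ lies in $B_\D$ and
$$|\tilde y_j(y)-y|\le\tfrac34\,y.$$
The base case $j=0$ is trivial, and for $k<j$ the bound $|\tilde y_k(y)|\le\tfrac74$ together with $|m_n(g^k(x_0))|\le M_n<\D/2$ gives $|\Im g^k(z(y))|<\tfrac78\,\D$, so the next application of $g$ is legitimate.

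The key identity, obtained by integrating $Dg^j$ along the vertical segment from $x_0$ to $z(y)$ (for the numerator) and along the real segment from $x_0$ to $x_0+m_n(x_0)$ (for the denominator, using $m_n(g^j(x_0))=g^j(x_0+m_n(x_0))-g^j(x_0)$, a consequence of the commutation of $g$ with $T^{p_n}$), is
$$\tilde y_j(y)=\frac{\int_0^y Dg^j\bigl(x_0+iu\,m_n(x_0)\bigr)\,du}{\int_0^1 Dg^j\bigl(x_0+u\,m_n(x_0)\bigr)\,du}.$$
To estimate the two integrands I use the telescoping identity
$$\log Dg^j(w)-\log Dg^j(x_0)=\sum_{k=0}^{j-1}\bigl[\log Dg(g^k(w))-\log Dg(g^k(x_0))\bigr],$$
bounding each summand by $\tau\,|g^k(w)-g^k(x_0)|$ with $\tau=\|D\log Dg\|_{C^0(B_\D)}<\tfrac19$. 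This is the hyperbolic feature of the argument: one uses only the non-linearity of $g$, never of $g^j$, thereby sidestepping Schwarzian estimates on the iterates. For $w$ real, monotonicity gives $|g^k(w)-g^k(x_0)|\le|m_n(g^k(x_0))|$; for $w=x_0+iu\,m_n(x_0)$ with $u\in[0,y_0]$, the inductive hypothesis gives $|g^k(w)-g^k(x_0)|=|m_n(g^k(x_0))|\,|\tilde y_k(u)|\le\tfrac74|m_n(g^k(x_0))|$. Lemma~\ref{lem_comb} then supplies the decisive summation $\sum_{k=0}^{q_{n+1}-1}|m_n(g^k(x_0))|\le 1$, since the intervals $I_n(g^k(x_0))$ have pairwise disjoint interiors modulo $\ZZ$.

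Combining these, $|\log Dg^j(w)-\log Dg^j(x_0)|\le\tau<\tfrac19$ on the real path and $\le\tfrac{7\tau}{4}<\tfrac{7}{36}$ on the vertical one. Factoring $Dg^j(x_0)$ out of both integrals produces
$$\bigl|\tilde y_j(y)/y-1\bigr|\le\frac{(e^{7\tau/4}-1)+(e^\tau-1)}{1-(e^\tau-1)},$$
and a routine arithmetic check shows the right-hand side is strictly less than $\tfrac34$ for $\tau<\tfrac19$, closing the bootstrap. Setting $y=y_0$ gives $|y_j-y_0|\le\tfrac34\,y_0$, whence $\Re y_j\ge y_0/4>0$. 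The main obstacle is keeping the induction honest: one cannot run it only at the endpoint $y=y_0$, because the numerator integrand samples $\tilde y_k(u)$ at every intermediate $u\in[0,y]$, so the inductive hypothesis must be uniform in $y\in[0,y_0]$. The second delicate point is the quantitative tightness: the hypothesis $\tau<\tfrac19$ is exactly what pushes the bootstrap factor below $\tfrac34$, and any relaxation of the non-linearity control would break the argument.
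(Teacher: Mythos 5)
Your proof is correct and follows essentially the same strategy as the paper's: a bootstrap over the full vertical segment, the chain rule applied to $\log Dg^j$ so that only the non-linearity of $g$ itself (not its iterates) enters, the combinatorial bound $\sum_{l<q_{n+1}}|m_n(x_l)|<1$, and a comparison of $Dg^j$ at the complex point with the ratio $m_n(x_j)/m_n(x_0)$. The only cosmetic difference is that the paper produces the ratio $m_n(x_j)/m_n(x_0)$ via the mean value theorem at a point $\zeta\in I_n(x_0)$ and then exponentiates, whereas you keep the integral representation of both numerator and denominator and factor out $Dg^j(x_0)$; the two are interchangeable.
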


\begin{proof}
For $0<t\leq 1$ we define more generally
$$
z_{0,t}= x_0+im_n(x_0) t y_0 \ ,
$$
and we prove that $y_{j,t}\in \CC$, $\Re y_{j,t}>0$, is well defined by
$$
z_{j,t}=g^j(z_{0,t})=g^j(x_0) + i m_n(g^j(x_0)) y_{j,t} \ ,
$$
and that we have
$$
|y_{j,t}-y_{0,t}|\leq \frac{3}{4} \, y_{0,t} \ .
$$
Note that this last inequality implies $\Re y_{j,t} \leq \frac{7}{4} y_{0,t}$. 
The lemma corresponds to the case $t=1$.

We prove this result by induction on $0\leq j < q_{n+1}$ starting from $j=0$ for which the result is obvious.
Assuming it has been proved up to $0\leq j-1 < q_{n+1}$, then we have 
$$
0<\Im z_{j-1,t} \leq M_n \Re y_{j-1,t} \leq M_n \frac{7}{4} \, t y_0 < \frac{7}{8} \, \D <\D \ ,
$$
so $z_{j-1,t} \in B_\D$ and we can iterate once more and $z_{j,t} = g(z_{j-1,t})$ is well defined. We need to prove the estimate 
for $y_{j,t}$.
By the chain rule we have
$$
\log D g^j (z_{0,t}) =\sum_{l=0}^{j-1} \log D g (z_{l,t}) \ .
$$
Therefore, we have
\begin{align*}
\left | \log D g^j (z_{0,t}) -\log D g^j (x_0) 
\right | &\leq \sum_{l=0}^{j-1} 
\left | \log D g (z_{l,t}) -\log D g (x_l) \right | \\
&\leq \tau \sum_{l=0}^{j-1} |z_{l,t} -x_l| \\
&\leq \tau \sum_{l=0}^{j-1} |m_n(x_l)| |y_{l,t}| \\
&\leq  \frac{7}{4} \tau   ty_{0} \sum_{l=0}^{j-1} |m_n (x_l)| \\
&\leq  \frac{7}{4} \tau    \sum_{l=0}^{j-1} |m_n (x_l)|  \ .
\end{align*}

Considering the $j$-iterate of $g$ on the interval $I_n(x_0)$, we obtain a point 
$\zeta \in  ]x_0, g^{q_n} (x_0)-p_n[$ such that,
$$
Dg^j (\zeta ) =\frac {m_n (x_j)}{m_n (x_0)} \ ,
$$
and 
$$
\left | \log D g^j (\zeta ) -\log D g^j (x_0) 
\right | \leq \tau |m_n (x_0)|\leq \tau 
\sum_{l=0}^{j-1} |m_n (x_l)| \ .
$$
Adding the two previous inequalities, we have 
$$
\left | \log D g^j (z_{0,t}) -\log \frac {m_n (x_j)} 
{m_n (x_0)} \right | \leq \frac{11}{4} \tau 
\sum_{l=0}^{j-1} |m_n (x_l)| \ .
$$
The intervals $I_n(x_l)$, $0\leq l < q_{n+1}$,  being disjoint modulo $1$, we have 
$$
\sum_{l=0}^{q_{n+1}-1} |m_n (x_l)| < 1 \ .
$$
So we obtain
$$
\left | \log D g^j (z_{0,T}) -\log \frac {m_n (x_j)}{ 
m_n (x_0) } \right | \leq \frac{11}{4} \, \tau \ ,
$$
and taking the exponential (using $|e^w-1|\leq 3/2 |w|$, 
for $|w| <1/2$, since $\tau <1/9$ and $\frac{11}{4}\, \tau <\frac12$), we have
$$
\left | D g^j (z_{0,t})-\frac {m_n (x_j)}{m_n (x_0) } 
\right | \leq \frac{33}{8} \, \tau \frac{m_n (x_j)}{ m_n (x_0) }.
$$
Now, integrating along the vertical segment $[x_0, z_{0,t}]$ we get
$$
\left | g^j (z_{0,t}) -g^j (x_0) -i y_0  m_n (x_j) 
\right | \leq \frac{33}{8} \, \tau   y_{0,t} |m_n (x_j)| \ ,
$$
which, using $\tau <1/9$, finally gives
$$
| y_{j,t} - y_{0,t} | \leq \frac{11}{24} \, y_{0,t} <\frac34 \, y_{0,t} \ .
$$

\end{proof}

\subsection{Flow interpolation in $\RR$.} Since $g$ is analytic, from Denjoy's Theorem we know that $g_{/\RR}$ 
is topologically linearizable, i.e. there exists an non-decreasing 
homeomorphism $h: \RR \to \RR$, such that for $x\in \RR$,
$h(x+1)=h(x)+1$, and 
$$
h^{-1} \circ g \circ h =T_{\alpha} \ ,
$$
where $T_\alpha : \RR \to \RR$, $x\mapsto x+\alpha $.

We can embed $g$ into a topological flow on the real line $(\varphi_t)_{t\in \RR}$ defined for $t \in \RR$ by  
$\varphi_t =h\circ T_{t\alpha }\circ h^{-1}$. When $g$ is analytically linearizable the diffeomorphisms of 
this flow are analytic circle diffeomorphisms, but in general, when $g$ is not analytically
linearizable  the maps $\varphi_t$ are only  
homeomorphism of the real line, although for $t \in \ZZ + \alpha^{-1} \ZZ$, $\varphi_{t}$ is analytic since 
$\varphi_t$ is an iterate of $g$ composed by an integer translation. This can happen that for other values 
of $t$, where $\varphi_t$ can be an analytic diffeomorphism from the analytic centralizer of $g$ since
$\varphi_t\circ g =g\circ \varphi_t$. We refer to \cite{PM3} for more information on this fact and examples 
of uncountable analytic centralizers for non-analytically linearizable dynamics. 
Now $(\varphi_t)_{t\in [0,1]}$ is an 
isotopy from the identity to $g$. The flow $(\varphi_t)_{t\in \RR}$ is  a one 
parameter subgroup of homeomorphisms of the real line commuting to the 
translation by $1$.

\medskip

\subsection{Flow interpolation in $\CC$.} There are different complex extensions of the flow $(\varphi_t)_{t\in \RR}$
suitable for our purposes. For each $n\geq 0$, we can extend this topological 
flow to a topological flow $\cF_n$ in $\CC$ by defining, 
for  $z_0 =x_0 +i \, | m_n (x_0)| y_0 \in \CC$, with $x_0, y_0 
\in \RR$, 
$$
\varphi_{t}^{(n)} (z_0)=z_0(t)= \varphi_t(x_0) +i \, |m_n (\varphi_t(x_0))| y_0 \ .
$$
We denote $\Phi^{(n)}_{z_0}$ the flow line passing through $z_0$,
$$
\Phi^{(n)}_{z_0} = (\varphi_{t}^{(n)} (z_0))_{t\in \RR}.
$$

\medskip

\subsection{Hyperbolic Denjoy-Yoccoz Lemma.} We are now ready to give a 
geometric version of Denjoy-Yoccoz Lemma. We 
denote by $d_P$ the Poincar\'e distance in the upper half plane.

\begin{lemma}[Hyperbolic Denjoy-Yoccoz Lemma] \label{lem_DYhyp}
Let $\Delta >0$ and $g\in D^{\o} (\TT , \D )$ such that 
$$
||D \ \log \ Dg ||_{C^0 (B_{\D} )} < 1/9 \ .
$$
Let $n_0 \geq 1$ large enough such that for all $n\geq n_0$, $M_n <\D/2$.
%

Let $z_0= x_0+i |m_{n}(x_0)| y_0$, with 
$0<y_0<1$, so $z_0 \in B_{\D}$. Then for $0\leq j\leq q_{n+1}$ we have that the $(g^j(z_0))$ piece of orbit
follows at bounded distance the flow $\cF_n$ for the Poincar\'e metric of the upper half plane. More precisely we have
$$
d_P(g^j(z_0), \varphi^{(n)}_j(z_0)) \leq C_0 \ ,
$$
for some constant $C_0 >0$ (we can take $C_0=3$).
\end{lemma}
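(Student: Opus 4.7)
The plan is to reduce the hyperbolic statement to the Euclidean estimate already established in Lemma \ref{lem_DenjoyYoccoz}, and then convert the resulting Euclidean bound to a Poincar\'e bound via the standard distance formula on the upper half plane. The first observation is that for integer $j$ the interpolating flow coincides with the diffeomorphism on $\RR$, since $\varphi_t = h\circ T_{t\alpha}\circ h^{-1}$ gives $\varphi_j = h\circ T_{j\alpha}\circ h^{-1} = g^j$. Hence the two points to be compared,
$$
g^j(z_0)=g^j(x_0)+i\,m_n(g^j(x_0))\,y_j \quad \text{and}\quad \varphi^{(n)}_j(z_0)=g^j(x_0)+i\,|m_n(g^j(x_0))|\,y_0,
$$
sit above the same real point $g^j(x_0)$, with imaginary parts controlled by the common real factor $m_n(g^j(x_0))$.

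Next I would assume without loss of generality that $m_n(x)>0$ on $\RR$; since $g^{q_n}-\mathrm{id}-p_n$ has constant sign (because $g$ is conjugate to an irrational rotation), the opposite case is obtained by complex conjugation. Under this assumption, Lemma \ref{lem_DenjoyYoccoz} applies directly with the same $z_0$ and produces $y_j\in\CC$ with $\Re y_j>0$ and the key bound $|y_j-y_0|\leq \tfrac34 y_0$. Writing $y_j=u_j+iv_j$ this says $u_j\in[\tfrac14 y_0,\tfrac74 y_0]$ and $|v_j|\leq\tfrac34 y_0$.

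Then I would simply compute, setting $a:=m_n(g^j(x_0))>0$:
$$
\Im\,g^j(z_0)=a u_j\geq \tfrac14 a y_0,\qquad \Im\,\varphi^{(n)}_j(z_0)=a y_0,
$$
$$
|g^j(z_0)-\varphi^{(n)}_j(z_0)|^2 = a^2\bigl(v_j^2+(u_j-y_0)^2\bigr)=a^2|y_j-y_0|^2\leq \tfrac{9}{16}\,a^2 y_0^2.
$$
Feeding these into the upper half plane identity
$$
\cosh d_P(z,w)=1+\frac{|z-w|^2}{2\,\Im z\,\Im w}
$$
yields
$$
\cosh d_P\bigl(g^j(z_0),\varphi^{(n)}_j(z_0)\bigr)\leq 1+\frac{(9/16)\,a^2y_0^2}{2\cdot(a y_0/4)\cdot a y_0}=1+\tfrac{9}{8}=\tfrac{17}{8},
$$
so $d_P\leq \cosh^{-1}(17/8)<3$, giving the desired $C_0=3$.

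There is essentially no obstacle here: the analytic content is already packaged in Lemma \ref{lem_DenjoyYoccoz} (via the non-linearity bound $\tau<1/9$), and the existence of the topological linearization needed to identify $\varphi_j$ with $g^j$ on $\RR$ is supplied by Denjoy's theorem. The only point that requires a moment of care is keeping track of signs so that both points genuinely lie in the upper half plane and the formula for $d_P$ applies; this is handled by the WLOG on $\mathrm{sgn}(m_n)$. The rest is elementary bookkeeping.
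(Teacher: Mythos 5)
Your proof is correct and follows essentially the same route as the paper: reduce to Lemma \ref{lem_DenjoyYoccoz} via $\varphi_j = g^j$ on $\RR$, then convert the Euclidean bound $|y_j - y_0|\le\tfrac34 y_0$ (and the consequent lower bound $\Re y_j\ge\tfrac14 y_0$) into a Poincar\'e bound using that both points share the real part $g^j(x_0)$ and have comparable imaginary parts of size $\asymp |m_n(x_j)|y_0$. The only cosmetic difference is that you invoke the closed-form identity $\cosh d_P = 1 + |z-w|^2/(2\,\Im z\,\Im w)$ rather than integrating $|d\xi|/\Im\xi$ along the vertical segment as the paper does, which gives you a marginally sharper numerical bound, and you are slightly more explicit about the sign of $m_n$ (the WLOG via conjugation), a point the paper leaves implicit.
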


\begin{proof}
 We just use Lemma \ref{lem_DenjoyYoccoz} reminding that the Poincar\'e metric in the 
 upper half plane is given by $|ds| =\frac{|d\xi|}{\Im \xi}$ and
 \begin{align*} 
 d_P(z_j , \varphi_j^{(n)}(z_0)) &\leq \int_{[z_j , \varphi_j^{(n)}(z_0)]} \frac{|d\xi|}{\Im \xi}  \\
 &\leq |m_{n}(x_j)|\, . \,|y_j-y_0| 
 \, \frac{1}{\inf_{\xi \in [z_j , \varphi_j^{(n)}(z_0)]} \Im \xi} \\
 &\leq |m_{n}(x_j)|\, . \, |y_j-y_0| \, \frac{4}{|m_n(x_j)| \, y_0} \\
 &\leq 4\, \frac{|y_j-y_0|}{y_0} \leq 3=C_0
 \end{align*}
 where in the second inequality we used that $\Re y_j \geq \frac{1}{4} y_0$ which follow from $|y_j-y_0|\leq \frac{3}{4} y_0$
 that we also used in the last inequality.
\end{proof}

\newpage

\section{Quasi-invariant curves for local hedgehogs.}\label{sec_quasi}

Now we construct quasi-invariant curves for $g$ under the previous assumptions: $g\in D^\o(\TT, \D )$ and 
$$
\tau=|| D\log Dg ||_{C^0(B_\D)} <1/9 \ .
$$

\begin{theorem} [Quasi-invariant curves] \label{thm_quasi}
 Let $g$ be an analytic circle diffeomorphism with irrational rotation number $\a$. 
 Let $(p_n/q_n)_{n\geq 0}$ be the sequence of convergents of $\alpha$ given 
 by the continued fraction algorithm.
 
 Given $C_0>0$ there is  $n_0 \geq 0$ large enough such that there is a sequence of Jordan curves $(\gamma_n)_{n\geq n_0}$ for $g$ which are  
 homotopic to $\SS^1$ and exterior to $\overline{\DD}$ such that all the iterates
 $g^j$, $0\leq j\leq q_{n}$, are defined in a neighborhood of the closure of the annulus $U_n$ bounded by $\SS^1$ and $\gamma_n$, 
 and we have 
 $$
 \cD_{P}(g^j(\gamma_n), \gamma_n) \leq C_0 \ ,
 $$
 where $\cD_P$ denotes the Hausdorff distance between compact sets associated to $d_P$, the Poincar\'e distance in $\CC-\overline{\DD}$.
 We also have for any $z\in \gamma_n$, $d_P(g^{q_{n}} (z), z) \leq C_0$, that is,
 $$
 ||g^{q_{n}} -{\hbox{\rm id}}||_{C^O_P(\gamma_n)} \leq C_0 \ .
 $$
\end{theorem}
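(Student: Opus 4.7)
The plan is to realize $\gamma_n$ as the projection under the universal cover $E(z)=e^{2\pi i z}$ of a $T_1$-equivariant graph
$$
\tilde\gamma_n=\{\,x - i\,|m_n(x)|\,y_0 : x\in\RR\,\}\subset\{\Im z<0\}\cap B_\Delta,
$$
where $y_0\in(0,1)$ is a single height parameter to be fixed depending on the prescribed $C_0$. Since $m_n$ has period $1$, this descends to a Jordan curve $\gamma_n\subset\CC-\overline{\DD}$ homotopic to $\SS^1$, and the closed annulus $\bar U_n$ between $\SS^1$ and $\gamma_n$ lifts to the region $\{x - i\,t\,|m_n(x)|\,y_0 : x\in\RR,\ 0\le t\le 1\}$.

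First I would check that $g^j$ is defined on a neighborhood of $\bar U_n$ for every $0\le j\le q_n$. Fix $n_0$ large enough that $M_n<\Delta/2$ for $n\ge n_0$. For any lifted point $z_0=x_0 - i\,|m_n(x_0)|\,t\,y_0$ with $t\in(0,1]$, Lemma~\ref{lem_DenjoyYoccoz} applies with $ty_0$ in place of $y_0$ and gives $|\Im g^j(z_0)|\le (7/4)\,M_n\,t\,y_0 < 7\Delta/8$ for every $0\le j\le q_{n+1}$. In particular each composition $g^j=g\circ g^{j-1}$ stays inside $B_\Delta$ up to $j=q_n$, so $g^j$ is defined on the required neighborhood.

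Next I would obtain the Hausdorff bound $\cD_P(g^j(\gamma_n),\gamma_n)\le C_0$ as an immediate consequence of Hyperbolic Denjoy-Yoccoz (Lemma~\ref{lem_DYhyp}). For $z_0\in\tilde\gamma_n$ the flow line point $\varphi_j^{(n)}(z_0)=g^j(x_0)-i\,|m_n(g^j(x_0))|\,y_0$ again lies on $\tilde\gamma_n$ by construction, and Lemma~\ref{lem_DYhyp} yields $d_P(g^j(z_0),\varphi_j^{(n)}(z_0))\le 3$. This gives one side of the Hausdorff inclusion. The reverse side is automatic: $\varphi_j^{(n)}$ is a bijection of $\tilde\gamma_n$ onto itself because $g^j\colon\RR\to\RR$ is one, so every point of $\tilde\gamma_n$ has the form $\varphi_j^{(n)}(z_0)$ and hence lies within Poincar\'e distance $3$ of $g^j(z_0)\in g^j(\tilde\gamma_n)$. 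Since $E$ is a local isometry for the Poincar\'e metrics of the lower half plane and of $\CC-\overline{\DD}$, these bounds pass to the curve downstairs.

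For the sharper pointwise bound at $j=q_n$ I would refine the flow-line computation. Using $g^{q_n}(x_0)=x_0+m_n(x_0)+p_n$ together with Corollary~\ref{cor_estimate} applied at $y=x_0+m_n(x_0)\in I_n(x_0)$, the point $\varphi_{q_n}^{(n)}(z_0)$ equals $z_0+m_n(x_0)+p_n$ up to a vertical error of order $o(1)\cdot|m_n(x_0)|\,y_0$. Modulo the deck translation $T^{-p_n}$, the Poincar\'e distance between $z_0$ and $z_0+m_n(x_0)$ in $\{\Im z<0\}$, two points at common height $|m_n(x_0)|\,y_0$ separated horizontally by $|m_n(x_0)|$, is of order $1/y_0$. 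Combined with Lemma~\ref{lem_DYhyp} this yields $d_P(g^{q_n}(z),z)\le 3+1/y_0+o(1)$ on $\gamma_n$. The one real obstacle is absorbing this $1/y_0$ contribution into the prescribed $C_0$: it reflects the unavoidable horizontal drift of size $m_n(x_0)$ produced by one application of $g^{q_n}$, and can only be controlled by choosing $y_0$ bounded away from zero (say $y_0=1/2$), after which $n_0$ is taken large enough to swallow the $o(1)$ term from Corollary~\ref{cor_estimate}.
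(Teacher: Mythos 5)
Your construction is correct and uses the same underlying mechanism as the paper: take the quasi-invariant curve to be a flow line $\Phi^{(n)}_{z_0}$ at a fixed height $y_0 \in (0,1]$ bounded away from $0$, and control its images by the hyperbolic Denjoy--Yoccoz Lemma. The differences are in how you close out the two estimates, and both of your routes are slightly more direct than the paper's.

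For the Hausdorff bound, the paper proves that the pieces $\tilde J_n^{(n)}(x)$ have uniformly bounded Poincar\'e diameter (Lemma~\ref{lem_bounded}), deduces that they cover the flow line (Lemma~\ref{lem_cover}), gets $C_0$-density of the flow orbit (Corollary~\ref{cor_dense}), and then combines this with Lemma~\ref{lem_DYhyp} to get a $2C_0$ Hausdorff bound. You instead observe that $\varphi^{(n)}_j$ is a self-bijection of the flow line and that Lemma~\ref{lem_DYhyp} gives $d_P(g^j(z_0),\varphi^{(n)}_j(z_0))\le 3$ uniformly, which gives both Hausdorff inclusions immediately without the covering argument. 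That is a genuine simplification for this particular statement (the covering property is still needed later, in Section~\ref{sec_osc}, but not here). For the pointwise return estimate, the paper again uses Lemma~\ref{lem_cover} (bounded Poincar\'e diameter of $\tilde J_n^{(n)}(x_0)$ plus the observation that $g^{q_{n+1}}(z_0)$ and $z_0$ both lie in it), while you compute the horizontal drift $m_n(x_0)$ of $\varphi^{(n)}_{q_n}$ explicitly and bound the resulting Poincar\'e displacement by $\sim 1/y_0$ at height $|m_n(x_0)|y_0$, absorbing the error from Corollary~\ref{cor_estimate}. Both arguments give the same universal constant. One small caveat worth noting: as you correctly observe, the $1/y_0$ term is unavoidable, so the theorem's quantifier ``given $C_0>0$'' cannot be read as ``arbitrarily small $C_0$''; like the paper, you produce a universal $C_0$ (on the order of $3+1/y_0$), which is all that is used downstream. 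Your index shift (your $\gamma_n$ built from $m_n$ versus the paper's $\gamma_{n+1}=\Phi^{(n)}$) is purely notational and does not affect the argument, since Lemma~\ref{lem_DenjoyYoccoz} at level $n$ already controls $q_n\le q_{n+1}$ iterates.
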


We choose the flow lines $\gamma_{n+1}=\Phi^{(n)}_{z_0}$, with $y_0 > 1/2$ and $n\geq n_0$ for $n_0\geq 1$ large enough, 
for the quasi-invariant curves of the Theorem. These flow lines are graphs over $\RR$. Given 
an interval $I\subset \RR$, we label $\tilde I^{(n)}$ the piece of $\Phi^{(n)}_{z_0}$ over $I$. 

\begin{lemma}\label{lem_bounded}
 There is $n_0\geq 1$  such that for $n\geq n_0$ and for  any $x\in \RR$, the piece $\tilde I_n^{(n)}(x)$  has bounded Poincar\'e diameter.
\end{lemma}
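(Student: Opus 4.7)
The plan is to bound the Poincar\'e \emph{length} of the piece $\tilde I_n^{(n)}(x)$ inside the upper half plane, which majorizes its Poincar\'e diameter. The piece is parametrized as a graph $x' \mapsto \xi(x') = x' + i\,|m_n(x')|\,y_0$ over the interval $I_n(x)$ of Euclidean length $|m_n(x)|$. So it is enough to control from above the modulus of the tangent vector $d\xi/dx'$, and from below the height $\Im \xi(x') = |m_n(x')|\,y_0$, and then integrate $|d\xi|/\Im \xi$ over $I_n(x)$.

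Concretely, I would first observe that since $\a$ is irrational, $g$ has no periodic points, so $m_n(x) = g_n(x) - x$ has a constant sign $\sigma \in \{\pm 1\}$ throughout $\RR$. Then $d\xi/dx' = 1 + i\sigma y_0 (Dg_n(x') - 1)$, and Corollary~\ref{cor_1} gives $|Dg_n - 1| \leq \eps$ for $n \geq n_0$, hence $|d\xi/dx'| \leq 1 + y_0 \eps \leq 1 + \eps$ (recalling that $y_0 \leq 1$). For the height, Corollary~\ref{cor_estimate} yields $|m_n(x')| \geq (1-\eps)|m_n(x)|$ uniformly for $x' \in I_n(x)$, so $\Im \xi(x') \geq (1-\eps)|m_n(x)|\,y_0$. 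Putting these two bounds together,
$$
\int_{I_n(x)} \frac{|d\xi/dx'|}{\Im \xi(x')}\,dx' \;\leq\; \frac{(1+\eps)|m_n(x)|}{(1-\eps)|m_n(x)|\,y_0} \;=\; \frac{1+\eps}{(1-\eps)\,y_0}.
$$
Since the chosen flow lines satisfy $y_0 > 1/2$, this is at most $3$ once $\eps$ is sufficiently small, yielding a universal bound on the Poincar\'e diameter.

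No serious obstacle is anticipated: the two corollaries from the previous section are tailored to exactly this kind of estimate, and the argument is a one-line integration once the sign of $m_n$ is fixed. The only sanity check is that $\tilde I_n^{(n)}(x)$ lies inside the strip $B_\D$ where the ambient Poincar\'e metric is defined, which follows from $\Im \xi(x') \leq (1+\eps) M_n\,y_0 < \D$ for $n$ large, since $M_n \to 0$ and $y_0 \leq 1$.
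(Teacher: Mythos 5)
Your proof is correct and follows the same route as the paper: bound the Poincar\'e length by integrating $|d\xi|/\Im\xi$ over $I_n(x)$, using Corollary~\ref{cor_1} to control the tangent vector and Corollary~\ref{cor_estimate} to bound the height from below, concluding with $y_0 > 1/2$. The additional remarks about the constant sign of $m_n$ and the curve staying inside $B_\Delta$ are harmless supplements but do not change the argument.
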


\begin{proof}
Let $z=x+i \, |m_n(x)| y_0$ be the current point in  $\tilde I_n^{(n)}(x)$. We have
$$
dz=\left ( 1 \pm i\, (Dg_n(x)-1) y_0 \right ) \, dx \ .
$$
For any $\eps_0 >0$,  choosing $n_0\geq 1$ large enough, for $n\geq n_0$, according to Lemma \ref{cor_1} we have
$$
\left |\frac{dz}{dx} -1 \right | \leq \eps_0 \ .
$$
Therefore, we have
$$
l_P (\tilde I_n^{(n)}(x_0))=\int_{\tilde I_n^{(n)}(x_0)} \frac{1}{|m_n(x)|\, y_0} \, |dz| \leq \int_{I_n(x_0)} \frac{1}{|m_n(x)|\, y_0} \, (1+\eps_0 )\, dx  \ .
$$
Now using Lemma \ref{cor_estimate} with $\eps= \eps_0$ and increasing $n_0$ if necessary, we have
$$
l_P (\tilde I_n^{(n)}(x))\leq \int_{I_n(x_0)} \frac{1}{|m_n(x_0)|\, y_0} \, \frac{1+\eps_0}{1-\eps_0} \, dx \leq \frac{1}{y_0} \frac{1+\eps_0}{1-\eps_0} 
\leq 2 \, \frac{1+\eps_0}{1-\eps_0}\leq C\ .
$$

\end{proof}

We assume $n\geq n_0$ from now on in this section and the next one.

\begin{lemma}\label{lem_cover}
 For $0\leq j < q_{n+1}$ and any $x\in \RR$, 
 the pieces $(g^j\circ T^k(\tilde J_n^{(n)}(x)))_{0\leq j\leq q_{n+1}, k\in \ZZ}$ have bounded Poincar\'e 
 diameter and cover $\Phi^{(n)}_{z_0}$.
\end{lemma}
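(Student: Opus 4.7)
My plan is to deduce both assertions from Lemma \ref{lem_bounded}, Lemma \ref{lem_comb}, and the Hyperbolic Denjoy-Yoccoz Lemma \ref{lem_DYhyp}, the guiding idea being that the flow $(\varphi_t^{(n)})$ serves as a combinatorially faithful model for the first $q_{n+1}$ iterates of $g$ whose deviation from $g^j$ is uniformly Poincar\'e-bounded.

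For the diameter bound, I would first note that $\tilde J_n^{(n)}(x) = \tilde I_n^{(n)}(x) \cup \tilde I_n^{(n)}(g_n^{-1}(x))$, so Lemma \ref{lem_bounded} (applied twice, and uniformly in the base point, since its proof only invokes the uniform Corollaries \ref{cor_1} and \ref{cor_estimate}) gives it bounded Poincar\'e diameter. Next, $T$ is a real translation, hence a Poincar\'e isometry of the upper half plane, and $m_n$ is $1$-periodic, so $T^k(\tilde J_n^{(n)}(x))$ is the flow-line piece over $J_n(x+k)$ with the same Poincar\'e diameter. Since $g$ commutes with both $T$ and $g_n = g^{q_n}-p_n$, the image $\varphi_j^{(n)}\circ T^k(\tilde J_n^{(n)}(x))$ is the flow-line piece over $J_n(g^j(x)+k)$, which again has Poincar\'e diameter bounded by the same uniform argument. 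Every point $w$ of $T^k(\tilde J_n^{(n)}(x))$ has the form $w = x' + i|m_n(x')|y_0$ with $y_0\in(1/2,1)$, so Lemma \ref{lem_DYhyp} yields $d_P(g^j(w), \varphi_j^{(n)}(w)) \leq C_0$ for all $0\leq j\leq q_{n+1}$. The triangle inequality then bounds the Poincar\'e diameter of $g^j\circ T^k(\tilde J_n^{(n)}(x))$ by the diameter of its flow image plus $2C_0$, hence by a uniform constant.

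For the covering, Lemma \ref{lem_comb} furnishes a cover of $\RR$ (at most twice) by the intervals $g^j\circ T^k(J_n(x))$ for $0\leq j<q_{n+1}$ and $k\in\ZZ$. Lifting vertically produces a cover of $\Phi^{(n)}_{z_0}$ at most twice by the flow-line pieces $\varphi_j^{(n)}\circ T^k(\tilde J_n^{(n)}(x))$, and each such piece sits within Poincar\'e distance $C_0$ of the true image $g^j\circ T^k(\tilde J_n^{(n)}(x))$. Hence the union of the $g^j\circ T^k(\tilde J_n^{(n)}(x))$ covers $\Phi^{(n)}_{z_0}$ up to uniformly bounded Poincar\'e distance, which is the sense of covering needed for the quasi-invariant curve construction of Theorem \ref{thm_quasi}.

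The central obstacle, handled precisely by Lemma \ref{lem_DYhyp}, is passing from the translation-like combinatorial model to the actual dynamics over a time window of length $q_{n+1}$ that grows without bound; a naive Gronwall-type estimate would explode. The point is that the smallness hypothesis $\tau < 1/9$ combined with the disjointness inequality $\sum_{l=0}^{q_{n+1}-1}|m_n(x_l)| < 1$ prevents accumulation of errors over the orbit, and this is exactly what makes the whole scheme work uniformly in $n$.
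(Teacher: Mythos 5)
Your proof is correct, but it takes a genuinely different route from the paper's, and the divergence traces back to how you read the notation $g^j\circ T^k(\tilde J_n^{(n)}(x))$. You read it literally as the image of the flow-line piece under the holomorphic iterate $g^j\circ T^k$, which is \emph{not} a subset of $\Phi^{(n)}_{z_0}$; to recover both the diameter bound and a usable covering statement you then have to invoke the Hyperbolic Denjoy-Yoccoz Lemma \ref{lem_DYhyp} to show these images stay $C_0$-close to the flow, and your conclusion is covering up to bounded Poincar\'e distance. The paper instead reads the expression as the flow-line piece $\tilde J_n^{(n)}(g^j\circ T^k(x))$ over the real image interval (the proof makes this explicit via the identity $g^j\circ T^k(J_n(x))=J_n(g^j\circ T^k(x))$); these pieces lie \emph{on} $\Phi^{(n)}_{z_0}$, so Lemma \ref{lem_bounded} gives the diameter bound directly and Lemma \ref{lem_comb} gives an exact cover, with no appeal to Lemma \ref{lem_DYhyp}. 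The paper defers the Denjoy-Yoccoz step to the proposition that follows Corollary \ref{cor_dense}, where the flow orbit is compared to the true $g$-orbit; your version effectively merges that step into the present lemma. Both organizations reach the same quasi-invariance statement, but the paper's split keeps this lemma purely combinatorial about the model flow, which is slightly cleaner and makes the role of each ingredient more visible.
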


\begin{proof}
 From Lemma \ref{lem_bounded} any $\tilde I^{(n)}_n(x)$ has bounded Poincar\'e diameter, thus also any
 $\tilde J^{(n)}_n(x)= \tilde I^{(n)}_n(x) \cup \tilde I^{(n)}_n(g_n^{-1}(x))$. Moreover, 
 we have $g^j\circ T^k(J_n(x))=J_n(g^j\circ T^k(x))$, and all $\tilde J^{(n)}_n(g^j\circ T^k(x)) $ 
 have also bounded Poincar\'e diameter. 
 From Lemma \ref{lem_comb} these pieces cover $\Phi^{(n)}_{z_0}$.
\end{proof}

\begin{corollary}\label{cor_dense}
 For some $C_0>0$, the flow orbit $(\varphi_{j,k}^{(n)} (z_0))_{0\leq j< q_{n+1}, k\in \ZZ}$ 
 is $C_0$-dense in $\Phi^{(n)}_{z_0}$ for the Poincar\'e metric.
\end{corollary}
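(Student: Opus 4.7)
The plan is to upgrade the covering statement of Lemma \ref{lem_cover} into a density statement by exhibiting, in each covering piece, a flow-orbit anchor. First I would fix an arbitrary $w \in \Phi^{(n)}_{z_0}$ and let $x = \Re w$. By Lemma \ref{lem_cover}, which rests on the combinatorics of Lemma \ref{lem_comb}, there exist indices $0 \leq j < q_{n+1}$ and $k \in \ZZ$ such that $w$ belongs to the piece of $\Phi^{(n)}_{z_0}$ lying over the interval $J_n(g^j \circ T^k(x_0))$ on the real line; that is, $x$ is caught in the at-most-twice covering of $\RR$ by these intervals.

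Next I would observe that the natural anchor is the flow-orbit point $\varphi_{j,k}^{(n)}(z_0) := \varphi_j^{(n)} \circ T^k(z_0)$, which by the definition of the flow $\cF_n$ has real part exactly $g^j \circ T^k(x_0)$. Consequently $\varphi_{j,k}^{(n)}(z_0)$ lies on $\Phi^{(n)}_{z_0}$ over the same interval, i.e.\ in the piece $\tilde J_n^{(n)}(g^j \circ T^k(x_0))$. Invoking Lemma \ref{lem_bounded}, applied to each half $\tilde I_n^{(n)}$ of this piece, then bounds its Poincaré diameter by a universal constant $C_0$ independent of $n \geq n_0$ and of the base point. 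This yields $d_P(w, \varphi_{j,k}^{(n)}(z_0)) \leq C_0$, which is the claimed density.

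The one mildly delicate point, which I anticipate as the main (though minor) obstacle, is reconciling the two descriptions of the covering piece: the one furnished by Lemma \ref{lem_cover} is $g^j \circ T^k(\tilde J_n^{(n)}(x_0))$, obtained by iterating $g$, whereas the anchor $\varphi_{j,k}^{(n)}(z_0)$ naturally lives in the flow-orbit piece $\tilde J_n^{(n)}(g^j \circ T^k(x_0))$. These two sets have the same projection to $\RR$ but differ in $\CC$. However, the Hyperbolic Denjoy--Yoccoz Lemma \ref{lem_DYhyp} shadows each segment of $g$-orbit by the corresponding segment of flow-orbit at Poincaré distance at most $3$, so enlarging $C_0$ by this additive constant absorbs the discrepancy. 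With that identification in place, the density conclusion follows immediately from the uniform diameter bound.
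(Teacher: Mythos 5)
Your proof is correct and follows the route the paper intends: the covering pieces from Lemma \ref{lem_cover} have uniformly bounded Poincar\'e diameter, and each piece $\tilde J_n^{(n)}(g^j\circ T^k(x_0))$ contains the flow-orbit anchor $\varphi_{j,k}^{(n)}(z_0)$ (the point of $\Phi^{(n)}_{z_0}$ over $g^j(x_0)+k$), so $C_0$-density follows at once. The ``delicate point'' you flag is in fact already dissolved inside the proof of Lemma \ref{lem_cover}, which passes to the flow pieces via the real identity $g^j\circ T^k(J_n(x))=J_n(g^j\circ T^k(x))$ and never needs the sets $g^j\circ T^k(\tilde J_n^{(n)}(x_0))$; your fallback through the Hyperbolic Denjoy--Yoccoz Lemma is valid but unnecessary here.
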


We prove the first property stated in Theorem \ref{thm_quasi}:

\begin{proposition} Let $\gamma_n =\Phi^{(n-1)}_{z_0}$  for some $z_0$ from the previous lemma, then we have, for $0\leq j\leq q_n$,
$$
\cD_P(g^j(\gamma_n), \gamma_n)\leq 2C_0
$$
\end{proposition}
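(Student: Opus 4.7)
The curve $\gamma_n=\Phi^{(n-1)}_{z_0}$ is, by the very definition of the flow $\mathcal{F}_{n-1}$, invariant under every time-map $\varphi_t^{(n-1)}$: every point on it has the form $w = x_w + i\,|m_{n-1}(x_w)|\,y_0$ with the \emph{same} scaled height $y_0 \in (1/2,1)$, and applying $\varphi_t^{(n-1)}$ just slides $w$ along $\gamma_n$. In particular $\varphi_j^{(n-1)}(\gamma_n)=\gamma_n$ for every integer $j$. This is the geometric fact that drives the estimate.

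The plan is to control both directions of the Hausdorff distance by a single application of the Hyperbolic Denjoy--Yoccoz Lemma (Lemma \ref{lem_DYhyp}), used with the index $n-1$ (so the iteration range becomes $0\leq j\leq q_n$, exactly what is claimed). First, for any $w\in\gamma_n$, the lemma applied at the base point $w$ (whose scaled height $y_0$ lies in $(1/2,1)$, hence satisfies the hypothesis) gives
\[
d_P\bigl(g^j(w),\varphi_j^{(n-1)}(w)\bigr)\leq C_0,
\]
and the right-hand point lies in $\gamma_n$ by invariance. Hence $g^j(\gamma_n)\subset \mathcal{N}_{C_0}^{P}(\gamma_n)$.

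Second, to handle the reverse inclusion, given $w\in\gamma_n$ I would set $z' := \varphi_{-j}^{(n-1)}(w)\in\gamma_n$; then $z'$ again has the form $x_{z'}+i|m_{n-1}(x_{z'})|y_0$ with the same $y_0$, so the hypothesis of Lemma \ref{lem_DYhyp} is satisfied at $z'$. Applying the lemma at $z'$ yields
\[
d_P\bigl(g^j(z'),\varphi_j^{(n-1)}(z')\bigr)=d_P\bigl(g^j(z'),w\bigr)\leq C_0,
\]
showing that $w\in\mathcal{N}_{C_0}^{P}(g^j(\gamma_n))$. Combining the two inclusions yields $\mathcal{D}_P(g^j(\gamma_n),\gamma_n)\leq C_0 \leq 2C_0$.

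The only mild technical point, which I would verify before invoking the lemma, is that the chosen $n_0$ is large enough so that Lemma \ref{lem_DYhyp} applies uniformly along $\gamma_n$: this amounts to $M_{n-1}<\Delta/2$, so every $w$ considered lies in $B_\Delta$ where $g^{\pm1}$ is holomorphic and $\|D\log Dg\|_{C^0(B_\Delta)}<1/9$ controls the iteration. There is no genuine obstacle here; the main work of the section is already packaged into the hyperbolic Denjoy--Yoccoz lemma, and the proposition is essentially its re-packaging in terms of invariant flow lines. The harder (second) estimate of Theorem \ref{thm_quasi}, the uniform bound $\|g^{q_n}-\mathrm{id}\|_{C^0_P(\gamma_n)}\leq C_0$, is what will require the covering and concatenation arguments of Lemma \ref{lem_cover} and Corollary \ref{cor_dense}, but that is a separate step.
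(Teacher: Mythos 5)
The proposal is correct. For the inclusion showing $g^j(\gamma_n)$ lies in a $C_0$-neighborhood of $\gamma_n$ your argument coincides with the paper's (the paper phrases it as ``since we can choose any $z_0\in\gamma_{n+1}$'', which is precisely applying Lemma \ref{lem_DYhyp} at an arbitrary base point $w\in\gamma_n$ and using that $\varphi^{(n-1)}_j(w)\in\gamma_n$). Where the routes diverge is the reverse inclusion: the paper goes through Corollary \ref{cor_dense}, combining the $C_0$-denseness of the discrete flow orbit in $\gamma_n$ with the $C_0$-bound of Lemma \ref{lem_DYhyp} between the $g$-orbit and the flow orbit, whence the factor $2C_0$; you instead apply Lemma \ref{lem_DYhyp} once more at the pre-flowed point $z'=\varphi^{(n-1)}_{-j}(w)$, which still lies on $\gamma_n$ by flow-invariance and still has scaled height $y_0\in(1/2,1]$, so that $d_P\bigl(g^j(z'),w\bigr)=d_P\bigl(g^j(z'),\varphi^{(n-1)}_j(z')\bigr)\leq C_0$ directly. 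This bypasses Corollary \ref{cor_dense} entirely and yields the sharper symmetric bound $\cD_P(g^j(\gamma_n),\gamma_n)\leq C_0$. Both proofs ultimately rest on the same two inputs, flow-invariance of the flow line $\Phi^{(n-1)}_{z_0}$ and Lemma \ref{lem_DYhyp} with a free base point, but your treatment of the second inclusion is slightly more economical. The bookkeeping you flag at the end (make sure $n-1\geq n_0$ so that $M_{n-1}<\D/2$ and the lemma applies along the whole curve) is exactly the right thing to check and causes no trouble.
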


\begin{proof} We prove this Proposition for $n+1$ instead of $n$ (the proposition is stated to match $n$ in Theorem \ref{thm_quasi}).
It follows from the hyperbolic Denjoy-Yoccoz Lemma that the orbit $(g^j\circ T^k (z_0))_{0\leq j<q_{n+1}, k\in \ZZ}$ is $C_0$-close to flow orbit 
$(\varphi_{j,k}^{(n)} (z_0))_{0\leq j<q_{n+1}, k\in \ZZ}$, and from Corollary \ref{cor_dense} we have that a $2C_0$-neighborhood of $g^j(\gamma_{n+1})$ 
contains $\gamma_{n+1}$. Conversely, since we can chooose any $z_0 \in \gamma_{n+1}$, we also have that $g^j(\gamma_{n+1})$ is in a $C_0$-neighborhood
of $\gamma_{n+1}$.
\end{proof}

We prove the second property of Theorem \ref{thm_quasi}. We observe that $g^{q_{n+1}}(z_0) \in \tilde J_n^{(n)}(x_0)$, that $z_0 \in \tilde J_n^{(n)}(x_0)$,
and that $\tilde J_n^{(n)}(x_0)$ has a bounded Poincar\'e diameter by Lemma \ref{lem_cover}. Thus we get (taking a larger $C_0 >0$ if necessary):

\begin{proposition} For any $z_0\in  \Phi^{(n)}$ , we have 
$$
 d_P(z_0, g^{q_{n+1}}(z_0)) \leq C_0 \ .
$$
 \end{proposition}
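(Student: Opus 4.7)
The plan is to apply the Hyperbolic Denjoy-Yoccoz Lemma at the extremal time $j = q_{n+1}$ and then match the flow endpoint to the starting point modulo the integer translation $p_{n+1}$, using the continued-fraction combinatorics that keep $g_{n+1}(x_0)$ inside $J_n(x_0)$. The bounded Poincar\'e diameter of $\tilde J_n^{(n)}(x_0)$ supplied by Lemma \ref{lem_cover} then closes the estimate.

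First I would invoke Lemma \ref{lem_DYhyp} at $j = q_{n+1}$ (the upper limit of the allowed range) to obtain
\[
d_P\bigl(g^{q_{n+1}}(z_0),\,\varphi^{(n)}_{q_{n+1}}(z_0)\bigr) \leq C_0 = 3,
\]
so the problem reduces to controlling the Poincar\'e distance between $z_0$ and the flow endpoint. Since the interpolating flow satisfies $\varphi_{q_{n+1}} = g^{q_{n+1}}$ on $\RR$, one has $\varphi^{(n)}_{q_{n+1}}(z_0) = g^{q_{n+1}}(x_0) + i\,|m_n(g^{q_{n+1}}(x_0))|\,y_0$.

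Next, I would use the standard fact from the continued-fraction algorithm that the errors $m_n(x_0) = g_n(x_0)-x_0$ alternate in sign and strictly decrease in modulus with $n$, so $g_{n+1}(x_0) = g^{q_{n+1}}(x_0) - p_{n+1}$ lies on the opposite side of $x_0$ from $g_n(x_0)$ and inside $J_n(x_0) = [g_n^{-1}(x_0), g_n(x_0)]$. Since $|m_n|$ is $1$-periodic (as $g$ commutes with $T$), the translated point $\varphi^{(n)}_{q_{n+1}}(z_0) - p_{n+1}$ sits on the flow line $\Phi^{(n)}_{z_0}$ above $g_{n+1}(x_0) \in J_n(x_0)$, hence in $\tilde J_n^{(n)}(x_0)$. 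The base point $z_0$ itself lies in $\tilde J_n^{(n)}(x_0)$ because $x_0 \in J_n(x_0)$.

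Finally, Lemma \ref{lem_cover} (built on Lemma \ref{lem_bounded}) gives that $\tilde J_n^{(n)}(x_0)$ has uniformly bounded Poincar\'e diameter for all $n \geq n_0$, so $d_P(z_0,\,\varphi^{(n)}_{q_{n+1}}(z_0) - p_{n+1})$ is bounded by an absolute constant. Adding the Denjoy-Yoccoz estimate and enlarging $C_0$ if necessary yields the claim, provided we pass from the Poincar\'e metric of the upper half plane to the Poincar\'e metric of $\CC - \overline{\DD}$ via the exponential cover $E$, under which the horizontal integer translation by $p_{n+1}$ becomes the identity. This is the only subtle point, and I expect it to be the main (mild) obstacle: $d_P(z, z + p_{n+1})$ can be large upstairs but vanishes in the annulus, so one must be explicit about which metric is used when combining the two bounds.
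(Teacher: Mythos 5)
Your proof is correct and takes the same route as the paper, which compresses the whole argument into the single observation that $g^{q_{n+1}}(z_0) \in \tilde J_n^{(n)}(x_0)$, $z_0 \in \tilde J_n^{(n)}(x_0)$, and $\tilde J_n^{(n)}(x_0)$ has bounded Poincar\'e diameter. You have correctly unpacked what that terse line leaves implicit: the hyperbolic Denjoy-Yoccoz Lemma at $j = q_{n+1}$ comparing $g^{q_{n+1}}(z_0)$ with the flow endpoint, the continued-fraction combinatorics placing $g_{n+1}(x_0)$ inside $J_n(x_0)$, and the passage modulo $T^{p_{n+1}}$ from the half-plane to the annulus Poincar\'e metric in which the stated bound actually holds.
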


 
\section{Osculating orbit.}\label{sec_osc}

 We prove the existence of an osculating orbit.

\begin{theorem} [Oscullating orbit]\label{thm_osc}
With the above hypothesis, for $n\geq n_o$ there exists a quasi-invariant curves $\gamma_n=\Phi^{(n-1)}_{z_0}$
such that the orbit $(g^j(z_0))_{0\leq j\leq q_n}$ is such that the union of Poincar\'e balls
$$
U_n=\bigcup_{0\leq j< q_{n}, k\in \ZZ} B_P(g^j(z_0)+k, C_0) \ ,
$$
separates $\RR$ from $\{\Im z > H\}$ with $H>0$ large enough, and any orbit $(g^j(w_0))_{j\in \ZZ}$ with $\Im w_0 >H$ with 
an iterate between $\gamma_n$ and $\RR$ has, for any $0\leq j\leq q_n$, an iterate in 
$$
\bigcup_{k\in \ZZ} B_P(g^j(z_0)+k, C_0) \ .
$$
\end{theorem}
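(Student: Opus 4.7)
The plan is to prove the two assertions of the theorem using the properties of the quasi-invariant flow line $\gamma_n = \Phi^{(n-1)}_{z_0}$ from Section~\ref{sec_quasi} together with the hyperbolic Denjoy--Yoccoz Lemma~\ref{lem_DYhyp}. For the separation assertion, I would combine Corollary~\ref{cor_dense}, which gives that the flow orbit $(\varphi^{(n-1)}_j(z_0) + k)_{0 \le j < q_n,\, k \in \ZZ}$ is $C_0$-dense in $\gamma_n$ for the Poincar\'e metric, with Lemma~\ref{lem_DYhyp}, which places each actual iterate $g^j(z_0)$ within Poincar\'e distance $C_0$ of its flow counterpart. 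This forces $\gamma_n$ to lie inside $U_n$ (after a mild enlargement of $C_0$ if needed). Since $\gamma_n$ is a graph over $\RR$ contained in the strip $\{0 < \Im z < M_{n-1}\} \subset B_\Delta$, choosing $H$ just above $\sup_{\gamma_n} \Im z$ makes $\gamma_n$---and hence $U_n$---separate $\RR$ from $\{\Im z > H\}$ in the cylinder $B_\Delta/\ZZ$.

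For the osculation assertion, let $(g^{j'}(w_0))_{j' \in \ZZ}$ be an orbit with $\Im w_0 > H$ that has some iterate between $\gamma_n$ and $\RR$. By separation some iterate must lie in $U_n$, so there exist $j_1 \in \ZZ$, $0 \le j_0 < q_n$, and $k_0 \in \ZZ$ with
\[
d_P\bigl(g^{j_1}(w_0),\, g^{j_0}(z_0) + k_0\bigr) \le C_0.
\]
Using the $\ZZ$-translation equivariance of $g$, I would then work modulo $k_0$ and apply Lemma~\ref{lem_DYhyp} at level $n-1$ to the orbit starting at $g^{j_1}(w_0) - k_0$: for $0 \le j \le q_n$, the iterate $g^{j_1 + j}(w_0) - k_0$ stays within Poincar\'e distance $C_0$ of the flow image $\varphi^{(n-1)}_j(g^{j_1}(w_0) - k_0)$. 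Because $g^{j_1}(w_0) - k_0$ and $g^{j_0}(z_0)$ are Poincar\'e-close and sit on level-$(n-1)$ flow lines of comparable vertical parameter, the explicit formula $\varphi^{(n-1)}_t(x + i|m_{n-1}(x)|y) = \varphi_t(x) + i|m_{n-1}(\varphi_t(x))|y$ keeps their two flow orbits Poincar\'e-close for all $t \in [0, q_n]$. A second application of Lemma~\ref{lem_DYhyp} to the orbit of $z_0$ and the triangle inequality then yield
\[
d_P\bigl(g^{j_1 + j}(w_0) - k_0,\; g^{j_0 + j}(z_0)\bigr) \le C'
\]
for $0 \le j \le q_n$ and some $C' = O(C_0)$. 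Setting $j_{\mathrm{new}} = j_0 + j$, these indices cover $[j_0, j_0 + q_n]$; using the near-periodicity $g^{q_n}(z_0) \approx z_0 + p_n$ supplied by Theorem~\ref{thm_quasi} to wrap around the indices exceeding $q_n$, one concludes that for every $0 \le j_{\mathrm{new}} \le q_n$ there is an iterate of $w_0$ inside $\bigcup_{k \in \ZZ} B_P(g^{j_{\mathrm{new}}}(z_0) + k,\, C_0)$, after absorbing $C'$ back into $C_0$.

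The hardest step is the shadowing bound: rigorously controlling the Poincar\'e distance between two genuinely nearby orbits of $g$ over $q_n$ iterations. The real-line Denjoy--Yoccoz estimates (Proposition~\ref{prop_estimate} and Corollary~\ref{cor_estimate}) give good $C^1$-distortion control at level $n-1$, but lifting this to a uniform Poincar\'e bound on the complex iterates requires simultaneously tracking the ratio of vertical coordinates and the horizontal drift on $\RR$ under $\varphi_t$---that is, invoking the flow structure of $\cF_{n-1}$ to propagate the initial Poincar\'e proximity forward in time without amplification.
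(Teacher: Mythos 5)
Your overall strategy matches the paper's: establish separation from the covering of $\gamma_n$ by Poincar\'e balls around the orbit, then show that any crossing orbit is forced $C_0$-close to $\gamma_n$ and hence shadows the $q_n$-orbit along the curve. The separation argument (Corollary~\ref{cor_dense} plus Lemma~\ref{lem_DYhyp} to replace flow points by actual orbit points, then $\gamma_n\subset U_n$) is exactly what the paper does. Where you diverge is in the final index-matching: you propose to use the near-periodicity $g^{q_n}(z_0)\approx z_0+p_n$ to ``wrap around'' indices exceeding $q_n$, which forces you to re-invoke shadowing once more to propagate that approximate identity forward; the paper avoids this by observing instead that the $q_n$-orbit of \emph{any} point $z_1\in\gamma_n$ covers $\gamma_n$ modulo $\ZZ$ (Lemma~\ref{lem_cover} applied with base point $z_1$), so each $g^j(z_0)$ is already $C_0$-close to some $g^{j'}(z_1)+k$ without any wrapping. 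Both routes work, but the paper's is cleaner and self-contained.

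You are right to single out the shadowing estimate as the real technical content: neither your sketch nor the paper's proof spells it out. The correct way to close it, within the machinery already built, is to note that if $d_P(w_1,z_1)\le C_0$ with $z_1\in\gamma_n$, then writing both points as $x+i|m_{n-1}(x)|y$, Corollary~\ref{cor_estimate} forces the $y$-parameters to be comparable (so Lemma~\ref{lem_DYhyp} applies to $w_1$ with the same $n$) and forces $x(w_1)$ and $x(z_1)$ to lie within $O(C_0)$ dynamical intervals $I_{n-1}$ of each other. Since $\varphi_t$ is a topological conjugate of the rigid translation, it preserves the cyclic order and the combinatorics of the intervals $I_{n-1}$, so $\varphi_t(x(w_1))$ and $\varphi_t(x(z_1))$ remain within $O(C_0)$ intervals of each other for all $t\in[0,q_n]$; by Lemma~\ref{lem_bounded} each such interval has bounded Poincar\'e diameter at the relevant height, and the $y$-parameters are flow-invariant, so $\varphi^{(n-1)}_j(w_1)$ and $\varphi^{(n-1)}_j(z_1)$ stay $O(C_0)$-close. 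Combining with Lemma~\ref{lem_DYhyp} for both orbits and the triangle inequality gives the shadowing bound. Spelling this out would make your proof complete.
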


%
%

From Lemma \ref{lem_cover} we get the property that the hyperbolic balls 
$B_P(\varphi^{(n)}_{t+k}(z_0), C_0)$ cover $\Phi^{(n)}_{z_0}$.

\begin{lemma} \label{lem_cov} We have that 
$$
U_n=\bigcup_{0\leq j< q_{n+1}, k\in \ZZ} B_P(\varphi^{(n)}_{t+k}(z_0), C_0)
$$
is a neighborhood of the flow line $\Phi^{(n)}_{z_0}$
\end{lemma}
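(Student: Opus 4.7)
The plan is to exhibit $U_n$ as open and then invoke the covering property already established (Lemma \ref{lem_cover} / Corollary \ref{cor_dense}) to see that it contains $\Phi^{(n)}_{z_0}$. As a union of open Poincar\'e balls, $U_n$ is automatically open in the relevant half-plane, so the entire content of the statement is the inclusion $\Phi^{(n)}_{z_0} \subset U_n$.

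First I would fix $z \in \Phi^{(n)}_{z_0}$ and write $x = \Re z$. By Lemma \ref{lem_comb}, the intervals $J_n(g^j\circ T^k(x_0))$ with $0\leq j < q_{n+1}$ and $k\in\ZZ$ cover $\RR$ (in fact at most twice), so some choice of $(j,k)$ satisfies $x \in J_n(g^j\circ T^k(x_0))$. Lifting to the flow line, $z$ lies in the flow piece $\tilde J^{(n)}_n(g^j\circ T^k(x_0))$, which also contains the reference point $\varphi^{(n)}_j(z_0)+k$ sitting over the interior point $g^j(x_0)+k$. Now Lemma \ref{lem_bounded} bounds the Poincar\'e diameter of every such $\tilde J^{(n)}_n(\cdot)$ by a constant $C$ depending only on $y_0$ and on the non-linearity bound $\tau < 1/9$, uniformly in $n\geq n_0$. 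Consequently
\[ d_P\!\left(z,\ \varphi^{(n)}_j(z_0)+k\right) \leq C, \]
and taking $C_0 \geq C$ (with a margin to pass from $\leq$ to strict $<$ in the ball definition) gives $z \in B_P(\varphi^{(n)}_j(z_0)+k,\,C_0) \subset U_n$, as required.

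I do not expect a real obstacle: the statement is essentially a repackaging of Corollary \ref{cor_dense}, modulo the distinction between the continuous flow line and the discrete orbit at integer times, which is absorbed by the diameter bound from Lemma \ref{lem_bounded}. The one point to watch is that the constant $C_0$ must be chosen once and for all and must work uniformly in $n\geq n_0$; this is guaranteed because Corollary \ref{cor_1} can be applied at a single $n_0$ to make $\|Dg_n - 1\|_{C^0}$ arbitrarily small for all $n\geq n_0$, so the diameter bound in Lemma \ref{lem_bounded} is genuinely uniform, and $C_0$ (enlarged if necessary beyond the value already supplied by Theorem \ref{thm_quasi}) can be fixed independently of $n$.
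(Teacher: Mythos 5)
Your proof is correct and follows the paper's intended route: the paper simply asserts the lemma as an immediate consequence of Lemma~\ref{lem_cover} and Corollary~\ref{cor_dense} (the ``proof'' environment following the lemma actually proves Theorem~\ref{thm_osc}), and what you write is exactly the unpacking of those references. The only minor imprecision is attributing the diameter bound for $\tilde J_n^{(n)}(\cdot)$ directly to Lemma~\ref{lem_bounded}, which only treats $\tilde I_n^{(n)}(\cdot)$; the passage to $\tilde J_n^{(n)}(x)=\tilde I_n^{(n)}(x)\cup \tilde I_n^{(n)}(g_n^{-1}(x))$ is made in the proof of Lemma~\ref{lem_cover}, so the cleaner citation is that lemma.
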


\begin{proof}
We prove Theorem \ref{thm_osc}. In the following argument $C_0$ will denote several universal constants.
Enlarging the constant $C_0$, and using Lemma \ref{lem_DYhyp} we can replace the points $\varphi^{(n)}_{t+k}(z_0)$ by the points $g^j(z_0)+k$ in the orbit 
of $z_0$ in Lemma \ref{lem_cov}. Also, any orbit that jumps over $\gamma_n$ (by positive or negative iteration) as in Theorem \ref{thm_osc} has to visit
a $C_0$-neighborhood of $\gamma_n$ , and will be $C_0$-close to a point $z_1\in \gamma_n$ and then will be $C_0$-close to the $q_n$-orbit 
of $z_1$ modulo $1$. Finally we can replace $z_1$ by $z_0$ using that each point of the $q_n$-orbit of $z_1$ is $C_0$-close to a point in the $q_n$-orbit 
of $z_0$ modulo $1$ (enlarge $C_0$ if need be). 
\end{proof}

%
%
%


\section{Proof of the main Theorem.}

We prove Theorems \ref{thm_unif} and \ref{thm_oscullating} that imply the main Theorem. We prove first the 
following preliminary Lemma that will allow us to work only with local hedgehogs.

\begin{lemma} \label{lem_bigmodulus}
 Let $g_n\in D^\omega (\TT, \D_n )$ with $\rho (g_n)=\alpha$ and $\D_n\to +\infty$. Then $g_n\to R_\a$ uniformly 
 on compact sets of $\CC^*$ and 
$$
\lim_{n\to +\infty} ||D \log D g_{n}||_{C^0(\RR )} =0 \ .
$$
\end{lemma}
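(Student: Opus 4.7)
The plan is to descend the setup through the covering $E(z)=e^{2\pi iz}$ and then apply normal family methods. Since each $g_n$ commutes with $T(z)=z+1$ and is univalent on $B_{\Delta_n}$, it descends to a univalent holomorphic map $G_n:A_{\Delta_n}\to\CC^*$ satisfying $G_n\circ E=E\circ g_n$; the values lie in $\CC^*$ because $E$ is nonvanishing. Since $G_n|_{\SS^1}$ is an orientation-preserving homeomorphism of $\SS^1$ with rotation number $\alpha$, $G_n$ preserves $\SS^1$ setwise, and orientation forces $G_n$ to send $\DD\cap A_{\Delta_n}$ into $\DD$ and $A_{\Delta_n}\setminus\overline{\DD}$ into $\CC\setminus\overline{\DD}$. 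As $\Delta_n\to+\infty$, the $A_{\Delta_n}$ exhaust $\CC^*$.

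First I would establish normality of $(G_n)$ on compact subsets of $\CC^*$: on any compact $K\subset\DD\setminus\{0\}$ one has $|G_n|<1$ and on any compact $K\subset\CC\setminus\overline{\DD}$ one has $|1/G_n|<1$ for all $n$ large enough. Thus, up to a subsequence, $G_n\to G_\infty$ uniformly on compact subsets of $\CC^*$ for some holomorphic $G_\infty$. Next I would identify $G_\infty$ as the rigid rotation $R_\alpha(w)=e^{2\pi i\alpha}w$. By Hurwitz applied to $(G_n)$ and $(1/G_n)$, either the limit is constant or $G_\infty:\CC^*\to\CC^*$ is univalent and omits both $0$ and $\infty$. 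The constant case is excluded because $G_n(\SS^1)=\SS^1$ cannot collapse to a point. By Picard's theorem, a univalent $G_\infty$ cannot have an essential singularity at $0$ or $\infty$, so $G_\infty$ extends to a M\"obius transformation of $\hat\CC$; since its unique zero and unique pole must lie in $\{0,\infty\}$, $G_\infty$ has the form $w\mapsto aw$ or $w\mapsto a/w$. The second is orientation-reversing on $\SS^1$ and is excluded, while the first, combined with $G_\infty(\SS^1)=\SS^1$ and rotation number $\alpha$, forces $a=e^{2\pi i\alpha}$. Since every subsequential limit equals $R_\alpha$, the full sequence converges: $G_n\to R_\alpha$ uniformly on compact subsets of $\CC^*$.

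For the non-linearity estimate, by the Cauchy--Weierstrass theorem $G_n'\to e^{2\pi i\alpha}$ and $G_n''\to 0$ uniformly on $\SS^1$. Differentiating $E\circ g_n=G_n\circ E$ twice and solving gives, for $z\in\RR$ and $w=E(z)\in\SS^1$,
$$
\frac{g_n''(z)}{g_n'(z)}=2\pi i\left(1+\frac{w\,G_n''(w)}{G_n'(w)}-\frac{w\,G_n'(w)}{G_n(w)}\right),
$$
whose right-hand side converges uniformly in $w\in\SS^1$ (hence uniformly in $z\in\RR$) to $2\pi i(1+0-1)=0$. Since $g_n$ is real-analytic and increasing on $\RR$, the left-hand side equals $D\log Dg_n(z)$, giving $\|D\log Dg_n\|_{C^0(\RR)}\to 0$.

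The main obstacle is the identification step: ruling out other M\"obius candidates, in particular elliptic automorphisms of $\DD$ with rotation number $\alpha$ and off-center fixed point. The decisive input is that $G_n$ neither vanishes nor hits $\infty$ on $\CC^*$, properties preserved under passage to the limit by Hurwitz, which forces the zero and pole of $G_\infty$ to lie in $\{0,\infty\}$ and leaves only the rigid rotation.
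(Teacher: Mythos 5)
Your proof is correct and follows essentially the same route as the paper: pass to the descended maps on $\CC^*$, establish normality, use Hurwitz to conclude the limit is univalent and identify it as $R_\alpha$ via preservation of the circle and continuity of rotation number, then deduce decay of the non-linearity from uniform convergence of derivatives. The only minor variations are that you invoke Picard (or Casorati--Weierstrass) to rule out essential singularities where the paper appeals to Riemann's removable-singularity theorem, and you make explicit the formula for $D\log Dg_n$ in terms of $G_n$ where the paper simply observes $D\log D R_\alpha = 0$.
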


\begin{proof} Let $\tilde g_n$ be the associated circle diffeomorphism.
The sequence $(\tilde g_n)$ is a normal family in $\CC^*$ (bounded inside $\DD$, and outside is the reflection across the unit circle), 
and any accumulation point is not constant since the 
unit circle is in the image of all $g_n$. Then by Hurwitz theorem any limit is an automorphism
of $\CC^*$, that extends to $0$ by Riemann's theorem, and so gives an automorphism of the plane 
leaving the unit circle invariant. The rotation number on the circle depends continuously on 
$\tilde g_n$ and is constant equal to $\alpha$, therefore the only possible limit of the sequence $(g_n)$ is $R_\a$. 
Since $D \log D R_\a =0$ we get the last statement.
\end{proof}

\medskip

We consider now the hedgehog $K_0$ given by Theorem \ref{thm_hedgehogs} for the domain $U=\DD_{r_0}$, and we use 
the relation between hedgehogs and analytic circle diffeomorphisms  
presented in \cite{PM5} to construct a circle diffeomorphism $g_0$.

\medskip

\begin{figure}[h]
\centering
\resizebox{9cm}{!}{\includegraphics{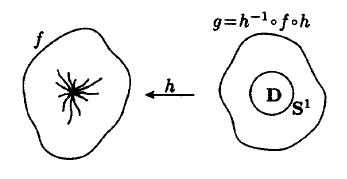}}    
\caption{Relation between hedgehogs and circle maps.} 
\end{figure}

\medskip

We consider a conformal representation $h_0: \CC-\overline{\DD} \to \CC-K_0$ ($\DD$ is the unit disk), and we conjugate the dynamics to a univalent map 
$g_0$ in an annulus $V$ having the circle $\SS^1=\partial \DD$ as the inner boundary,
$$
g_0=h_0^{-1}\circ f\circ h_0 : V \to \CC \ .
$$
The topology of $K_0$ is complex (\cite{B1}, \cite{B2}, \cite{PM3}) and in particular $K_0$ 
is never locally connected, and $h_0$ does not extend to a continuous correspondence between $\SS^1$ and $\partial K_0$. 
Nevertheless, $f$ extends continuously to Caratheodory's prime-end compactification of $\CC-K_0$. This shows that $g_0$ 
extends continuously to $\SS^1$ and its Schwarz
reflection defines an analytic map of the circle defined on $V\cup \SS^1 \cup \bar V$, where $\bar V$ is the reflected annulus of $V$. Then 
it is not difficult to see that $g_0$ is an analytic circle diffeomorphism. We can also prove that $g_0$ has rotation number $\a$. This is 
harder to prove in general (for an aribtrary hedgehog), but it is not difficult to show that we can pick $K_0$ so that 
the rotation number of $g_0$ is $\a$ (see \cite{PM5} Lemma III.3.3) that is enough for our purposes. We choose such a $K_0$.
Therefore, the dynamics in a complex neighborhood of $K_0$ corresponds to the dynamics of an analytic circle diffeomorphism with 
rotation number $\a$.

\medskip

%

There is no risk of confusion and we denote also $g_0$ the lift to $\RR$.

\begin{theorem}
Let $\eps_0 >0$ and $\D >0$ be given

For $r_0>0$ small enough we have $g_{0} \in D^\omega(\TT, \D )$ and 
$$
||D \log D g_{0}||_{C^0(\RR )} <\eps_0 \ .
$$
\end{theorem}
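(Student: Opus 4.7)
The plan is to reduce the statement to Lemma \ref{lem_bigmodulus} by showing that, as $r_0 \to 0$, the circle diffeomorphism $g_0$ extends analytically to strips $B_{\Delta(r_0)}$ whose width $\Delta(r_0) \to +\infty$. The lemma then directly supplies the decay of the non-linearity on $\RR$.

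First I would rescale by the conformal radius. Set $\lambda = \mathrm{cap}(K_0)$. Since $K_0$ is a continuum containing $0$ that meets $\partial\DD_{r_0}$, its diameter is at least $r_0$ and the classical estimate gives $\lambda \geq r_0/4$, while $K_0 \subset \overline{\DD}_{r_0}$ forces $\lambda \leq r_0$. Put $\tilde h_0 = h_0/\lambda$, the normalized conformal representation $\CC - \overline{\DD} \to \CC - (K_0/\lambda)$ with Laurent expansion $\tilde h_0(z) = z + O(1)$ at $\infty$ (leading coefficient $1$). The inverted function $G(w) = 1/\tilde h_0(1/w)$ is univalent on $\DD$ with $G(0) = 0$ and $G'(0) = 1$, so Koebe's distortion theorem gives the universal lower bound $|G(w)| \geq |w|/(1+|w|)^2$, hence $|\tilde h_0(z)| \leq (|z|+1)^2/|z|$ on $\{|z| \geq 1\}$. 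Undoing the rescaling yields
$$
|h_0(z)| \leq \frac{(R+1)^2}{R}\,\lambda \leq \frac{(R+1)^2}{R}\,r_0 \qquad \text{on } \{1 \leq |z| \leq R\},
$$
a bound that is \emph{uniform} over the family of hedgehogs.

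Since $f$ and $f^{-1}$ are defined and univalent on a fixed disk $\DD_{R_f}$ (the germ's domain), the preceding estimate implies that for $r_0$ below a threshold depending on $R$, the image $h_0(\{1 < |z| < R\})$ lies in $\DD_{R_f}$. Hence both $g_0 = h_0^{-1} \circ f \circ h_0$ and $g_0^{-1} = h_0^{-1} \circ f^{-1} \circ h_0$ extend holomorphically to the annulus $\{1 < |z| < R\}$, and Schwarz reflection across $\SS^1$ extends them to the symmetric annulus $\{1/R < |z| < R\}$; in the logarithmic coordinate this contains a neighborhood of $\bar B_\Delta$ for $\Delta$ slightly less than $\log R/(2\pi)$, so choosing $R$ slightly larger than $e^{2\pi\Delta}$ gives $g_0 \in D^\omega(\TT,\Delta)$ for all $r_0$ small. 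For the non-linearity, along any sequence $r_n \to 0$ the above produces $\Delta_n \to +\infty$ with $g_{r_n} \in D^\omega(\TT, \Delta_n)$, each of rotation number $\alpha$ by the choice of $K_0$; Lemma \ref{lem_bigmodulus} then gives $||D \log Dg_{r_n}||_{C^0(\RR)} \to 0$, and since this holds along every such sequence, $||D \log Dg_0||_{C^0(\RR)} < \eps_0$ for all $r_0$ sufficiently small. The main obstacle is the uniform Koebe distortion bound on $\tilde h_0$, which must not depend on the intricate, non-locally-connected topology of the hedgehog; it is precisely the feature of Koebe's theorem that the estimate depends only on the normalization of $G$ at the origin, not on the image, that makes this bound universal and lets the width of extension grow unboundedly as the hedgehog shrinks.
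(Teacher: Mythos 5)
Your proof is correct and reaches the result through the same overall structure as the paper's (show the strip of analyticity of $g_0$ widens without bound as $r_0 \to 0$, then hand off to Lemma \ref{lem_bigmodulus} for the decay of the non-linearity), but the mechanism you use for the widening is different. The paper argues via moduli of annuli: the annulus $\DD_{r_0'} - K_0$ on which $f$ and $f^{-1}$ act has modulus tending to $+\infty$ as $K_0 \to \{0\}$, and the Gr\"otzsch extremal problem then forces the conformally equivalent domain $h_0^{-1}(\DD_{r_0'} - K_0) \subset \CC - \overline{\DD}$, having $\SS^1$ as inner boundary and the same large modulus, to contain a round annulus $\{1<|z|<R\}$ with $R$ arbitrarily large. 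You instead normalize $h_0$ by the logarithmic capacity $\lambda$ of $K_0$, invert to $G(w) = 1/\tilde h_0(1/w)$, and apply the Koebe distortion theorem to get the explicit growth bound $|h_0(z)| \leq r_0 (|z|+1)^2/|z|$, which directly places $h_0(\{1<|z|<R\})$ inside the fixed domain of $f$ once $r_0$ is small. Your route is more quantitative and produces concrete constants, at the small cost of the capacity normalization bookkeeping (and note the lower bound $\lambda \geq r_0/4$ you record is never used; only $\lambda \leq r_0$ enters). The two extremal-length-type inputs (Koebe distortion versus Gr\"otzsch module) are close cousins, but as applied here they are genuinely distinct steps; both are valid, and after either one the appeal to Lemma \ref{lem_bigmodulus} is identical.
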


\begin{proof}
 When $r_0\to 0$, we have $K_{0} \to \{ 0\}$ and the annulus where $g_{0}$ and $g_0^{-1}$ are defined has a modulus $M_0\to +\infty$.
Therefore, by Gr\"otsch extremal problem, for $r_0>0$ small enough we have $g_0 \in D^\omega(\TT, \D )$.  From Lemma \ref{lem_bigmodulus}
 $$
 \lim_{r_0\to 0} ||D \log D g_{0}||_{C^0(\RR )} =0 \ ,
 $$
 and the result follows.
\end{proof}

Let $\eps_0=1/9$ and $\D >0$ be as in Section \ref{sec_quasi} and Section \ref{sec_osc}. 
We fix now $r_0>0$ small enough such that $g_{0} \in D^\omega(\TT, \D )$, $\rho(g_0) =\a$, and
$$
||D \log D g_{0}||_{C^0(\RR )} <\eps_0 \ ,
$$
so that the hypothesis of Theorem \ref{thm_quasi} are fulfilled for $g_{0}$. Now we can apply Theorem \ref{thm_osc} and find 
a sequence $(\gamma_n)_{n\geq n_0}$ of quasi-invariant curves for $g_{0}$. We transport them by $h_{0}$ to get a
sequence of Jordan curves $(\eta_n)_{n\geq n_0}$
$$
\eta_n =h_{0} (\gamma_n) \ .
$$
We have 
$$
||g_{0}^{q_n}-{\hbox{\rm id}}||_{C_P^0(\gamma_n)} \leq C_0\ ,
$$
therefore, for the Poincar\'e metric of the exterior of the hedgehog,
$$
||f^{q_n}-{\hbox{\rm id}}||_{C_P^0(\eta_n)} \leq C_0\ ,
$$
and, since $\eta_n \to K_0$, for the euclidean metric, we have
$$
||f^{q_n}-{\hbox{\rm id}}||_{C^0(\eta_n)} =\eps_n \to 0\ .
$$
Thus, if $\Omega_n$ is the Jordan domain bounded by $\eta_n$, by the maximum 
principle we have
$$
||f^{q_n}-{\hbox{\rm id}}||_{C^0(\Omega_n)} =\eps_n \to 0\ .
$$
Since $\Omega_n$ is a neighborhood of $K_0$, $K_0\subset \bar \Omega_n$, we have 
$$
||f^{q_n}-{\hbox{\rm id}}||_{C^0(K_0)} =\eps_n \to 0\ .
$$
This proves Theorem \ref{thm_unif} for the positive iterates (same proof for the negative ones, or just apply the result to $f^{-1}$).

\medskip

We prove Theorem \ref{thm_oscullating} for $K_0$, or more precisely for $\partial K_0$ that was noted before that 
is enough for proving the Main Theorem (the hedgehog $K_0$ has empty interior 
and $K_0=\partial K_0$, but we don't need to use this fact).
For the proof of Theorem \ref{thm_oscullating} we transport by $h_{0}$ the Poincar\'e $C_0$-dense 
orbit $(g^j_{0}(z_0))_{0\leq j\leq q_n}$ given by Theorem \ref{thm_osc}. Let $\zeta_0=h_{0}(z_0)$ and 
$\cO_n=(f^j(\zeta_0))_{0\leq j\leq q_n}$ be this orbit.  Since $\eta_n \to \partial K_0$, we have, 
for $\eps_n\to 0$
$$
\cD(\cO_n,\partial K_0) \leq  \cD (\cO_n , \eta_n)+ \cD(\eta_n ,  \partial K_0) \leq \eps_n \ ,
$$
where $\cD$ denotes  the Hausdorff distance for the euclidean metric. 

Then any orbit starting outside of $\eta_n$ with an 
iterate inside $\eta_n$ must visit $\eps_n$-close for the euclidean metric any point of $\partial K_0$ that is strictly larger 
than $\{0\}$.


\end{document}